\newtheorem{theorem}{Theorem}[section]
\newtheorem{lemma}[theorem]{Lemma}
\theoremstyle{remark}
\newtheorem{assumption}{Assumption}
\newtheorem{proposition}{Proposition}
\newenvironment{supplemma}[1]
  {\lemma}
  {\endlemma}
\newcommand{\h}[1]{\textcolor{black}{#1}}
\def\segp{\textrm{SEGP}}
\def\rateOne{\underline{\varepsilon}}
\def\rateTwo{\varepsilon}
\def\fn{\mathrm{FN}}
\def\fp{\mathrm{FP}}
\begin{document}

\begin{frontmatter}
  \title{Variable Selection Consistency of Gaussian Process Regression } 
  \runtitle{GP VS Consistency}

  \begin{aug}
    \author{\fnms{Sheng} \snm{Jiang}\ead[label=e1]{sheng.jiang@duke.edu}}
    \and 
    \author{\fnms{Surya T.} \snm{Tokdar}\ead[label=e2]{surya.tokdar@duke.edu}}



    \address{Department of Statistical Science, 
      Duke University,
      \printead{e1,e2}}
    
  \end{aug}


  \maketitle   
  \begin{abstract}
    Bayesian nonparametric regression under a rescaled Gaussian
    process prior offers smoothness-adaptive function estimation with
    near minimax-optimal error rates. Hierarchical extensions of this
    approach, equipped with stochastic variable selection, are known
    to also adapt to the unknown intrinsic dimension of a sparse true
    regression function. But it remains unclear if such extensions
    offer variable selection consistency, i.e., if the true subset of
    important variables could be consistently learned from the
    data. It is shown here that variable consistency may indeed be
    achieved with such models at least when the true regression
    function has finite smoothness to induce a polynomially larger penalty
    on inclusion of false positive predictors.
    Our result covers the high dimensional asymptotic setting
    where the predictor dimension is allowed to grow with the sample size.
    The proof utilizes Schwartz theory to establish that
    the posterior probability of wrong selection vanishes asymptotically.
    A necessary and challenging technical development involves
    providing sharp upper and lower bounds to small ball probabilities
    at all rescaling levels of the Gaussian process prior,
    a result that could be of independent interest. 
  \end{abstract}

  \begin{keyword}[class=MSC]
    \kwd[Primary ]{62G08}
    \kwd{62G20}
    \kwd[; secondary ]{60G05}
  \end{keyword}

  \begin{keyword}
    \kwd{Gaussian process priors} 
    \kwd{high-dimensional regression}
    \kwd{nonparametric variable selection}
    \kwd{Bayesian inference}
    \kwd{adaptive estimation} 
  \end{keyword}

  
\end{frontmatter}

\section{Introduction} 
\label{sec:Intro} 
\h{Sparse estimation and variable selection in high dimensional linear regression has been well studied 
\citep{wainwright2009information, buhlmann2011statistics, verzelen2012minimax, narisetty2014bayesian,
castillo2015bayesian, hahn2015decoupling, 
  yang2016computational, wainwright2019high}.
But an
assumption of linearity could be overly restrictive and prone to model misspecification.} A natural alternative is to allow the predictor-response relationship to be flexibly determined by an unknown smooth function $f:\R^d \to \R$, leading to the nonparametric regression model
\begin{equation}
  \label{eq:NPreg}
  Y_i=f(X_i) + \epsilon_i, \quad\quad \epsilon_i \mid X_i \distras{iid} N(0,\sigma^2),
\end{equation}
for paired data $(X_i, Y_i) \in \R^d \times \R$, $i = 1, \ldots, n$.
No theoretical results currently exist on simultaneously estimating $f$ and recovering its sparsity pattern, especially in the high dimensional setting. Earlier results restrict to the low dimensional settings of $d = o(\log n)$ or $d = O(\log n)$ \citep{zhang1991variable, lafferty2008rodeo, bertin2008selection}. \citet{comminges2012tight} present results under more relaxed settings, establishing that variable selection consistency is achievable for designs of size $\log(d) = o(n)$. However, they focus exclusively on understanding when it is possible to consistently recover the sparsity pattern of $f$, rather than providing a practicable estimation method of either the function or the sparsity pattern. More recent works have mostly concentrated on sparse additive formulations of $f$ \citep{raskutti2012minimax,butucea2017adaptive}.

In order to estimate $f$ from data, it is attractive to adopt a Bayesian approach where a Gaussian process prior is specified on the unknown regression function. Fairly expressive yet computationally tractable estimation models could be developed by specifying hierarchical extensions of this prior with rescaling and stochastic variable selection in order to infer smoothness and sparsity properties of $f$, and consequently, improve function estimation and prediction \citep{rasmussen2006gaussian, ghosal2017fundamentals}. Indeed, under such Bayesian approaches, the posterior distribution of the regression function is known to contract to the true function at a near minimax optimal rate,
adapting to both unknown smoothness and sparsity \citep{van2009adaptive,tokdar2011dimension, 
  bhattacharya2014anisotropic, yang2015minimax}. 

\h{Does this adaptive regression function estimation accuracy translate to correct identification of relevant regressors?} 
We show that the answer is partially {\it yes}. 
Specifically, we establish that when the true sparsity pattern has a fixed cardinality $d_0$ and the true regression function is Sobolev $\beta$-smooth, an appropriately specified rescaled Gaussian process model with stochastic variable selection simultaneously estimates the regression function with near minimax optimal estimation accuracy and recovers the correct sparsity pattern with probability converging to one, provided $\log (d) \le O(n^{d_0/(2\beta + d_0)})$ and the predictors are independent Gaussian variables with a common variance.

Like \cite{comminges2012tight}, our result covers the case where the predictor dimension may grow much faster than the sample size, but ties the growth rate with the smoothness level of the true regression function. Larger design dimensions  are allowed when the true function is more rough. Both this smoothness dependent bound on the predictor dimension and the independent Gaussian design assumption are necessary for our proof technique which relies on establishing a polynomial slowdown in the posterior contraction rate of a rescaled Gaussian process model when it includes more predictors than necessary.

Establishing this polynomial slowdown in contraction rate requires new and detailed calculations of concentration probabilities of a rescaled Gaussian process around a given function of limited {Sobolev} smoothness. 
To prove that a rescaled Gaussian process estimation model achieves near minimax-optimal contraction rate, \citet{van2009adaptive} derived lower bounds on the concentration probabilities for a carefully chosen range of rescaling levels. To establish a polynomial slowdown in rate, we additionally need sharp \textit{upper} bounds on the concentration probabilities at \textit{every} rescaling level.  
These 
are derived by an accurate characterization of {\it small $L_2$-ball probabilities} of a rescaled Gaussian process at \textit{every} rescaling level. 
We use the metric entropy method
\citep{kuelbs1993metric,li1999approximation, van2008reproducing} 
to turn small ball probability calculations into calculating the metric entropy of the unit ball of the Reproducing Kernel Hilbert Space (RKHS) associated with the Gaussian process. Critically, the Gaussian design assumption allows mapping the RKHS of a squared-exponential Gaussian process 
to an $\ell_2$ ellipsoid whose metric entropy can be bounded accurately.

\relax

\section{GP Regression with Stochastic Variable Selection} 
\label{sec:model-setup} 

Our asymptotic analysis concerns a sequence of experiments, indexed by sample size $n = 1, 2, \ldots$, 
and with associated sample spaces
$\mathcal{D}_n = \{(X_i, Y_i) \in \R^{d_n} \times \R, 1 \le i \le n\}$, 
in which $X_i$'s are taken to be independent realizations 
from a probability measure $Q_n$ on $\R^{d_n}$ 
and $Y_i$'s are realized as
in the nonparametric regression formulation \eqref{eq:NPreg}
for some known $\sigma > 0$ and some unknown $f \in C(\R^{d_n})$,
the space of continuous functions on $\R^{d_n}$. We study a Bayesian estimation of $f$ under a rescaled Gaussian process prior with stochastic variable selection. This prior is formalized by hierarchically extending the rescaled Gaussian process prior of \citep{van2009adaptive}; \h{see also \cite{yang2015minimax}}.

\subsection{Prior specification}
\label{sec:GPprior}
We call a stochastic process $W = (W(x): x \in \R^d)$ a standard, squared exponential Gaussian process on $\R^d$, if for any finite collection $\{x_1, \ldots, x_k\} \subset \R^d$, the random vector $(W(x_1), \ldots, W(x_k))$ has a mean-zero, $k$-variate Gaussian distribution with covariance matrix $((K(x_i, x_j))_{1 \le i, j \le k}$ where $K(x, x') = \exp\{-\|x - x'\|_2^2\}$. It is well known that such a $W$ could be seen as a random element of $C(\R^d)$ and is infinitely differentiable with probability one.

Now, given a $\gamma \in \{0,1\}^d$ and a probability measure $\pi$ on $(0,\infty)$, 
define a $\gamma$-sparse, $\pi$-rescaled, squared exponential Gaussian process measure, \segp$(\R^d; \gamma, \pi)$ for short, as the probability law of the stochastic process $W = (W(x): x\in \R^d)$ given as $W(x) = W_0(A x_\gamma)$ where $W_0$ is a standard, squared exponential Gaussian process on $\R^{|\gamma|}$, $A \sim \pi$ independently of $W_0$,
and $x_\gamma = (x_j : \gamma_j = 1,  j = 1, \ldots, d)$
denotes the sub-vector selected according to $\gamma$.
The rescaling measure $\pi(A)$ plays a key role in smoothness adaptation, facilitating a Bayesian version of fully automated, data-driven bandwidth selection \citep{van2009adaptive}.

For our experiment sequence $\cD_n$, a rescaled Gaussian process prior on $f \in C(\R^{d_n})$  with stochastic variable selection is defined as the marginal law of $f$ induced by any joint probability measure $\Pi_n$ on $(\Gamma, f) \in \{0,1\}^{d_n} \times C(\R^{d_n})$ satisfying the following
\begin{enumerate}
\item $\Pr(\Gamma = \gamma) = q_n(|\gamma|) / {d_n \choose |\gamma|}$, $\gamma \in \{0,1\}^{d_n}$, for some probability vector $(q_n(d): 0 \le d \le d_n)$ with $q_n(d_n) < 1$.
\item For every $\gamma \in \{0,1\}^{d_n}$,
  $f \mid (\Gamma = \gamma) \sim \segp(\R^{d_n}; \gamma, \pi_{n,|\gamma|})$,
  determined by a collection of probability measures $\pi_{n,d}$ on $(0,\infty)$, $0 \le d \le d_n$.
\end{enumerate}

The sparsity pattern of $f$ is fully encoded by the binary vector $\Gamma$. Let $Q_{n,j}$ denote the marginal distribution of the $j$-th regressor under $X \sim Q_n$. Any $f \in C(\R^{d_n})$ is constant along an axis $j \in \{1, \ldots, d_n\}$ if and only if $\|f - f_j\|_{L_2(Q_n)} = 0$
where $f_j(x) := \int f(x_1, \ldots, x_{j-1}, z, x_{j+1}, \ldots, x_{d_n}) dQ_{n,j}(z)$.
Under the prior $\Pi_n$, the sparsity pattern of $f$ given by the subset $\{1 \le j \le d_n : f~\mbox{is not constant along axis}~j\}$ is identical to $\Gamma$ with probability one. Notice that the prior on variable selection is taken to depend only on the cardinality of the included subset.

\subsection{Connecting selection consistency with  estimation accuracy}
\label{sec:general-approach}

Given observed data $D_n \in \mathcal{D}_n$, let $\Pi_n(\cdot \mid D_n)$ denote the joint posterior distribution on $(\Gamma, f)$ under a prior distribution $\Pi_n$ as above. Assuming $D_n$ was generated from a {\it true} regression function $f^*_n $  
whose sparsity pattern is identified by a $\gamma^*_n \in \{0,1\}^{d_n}$, the issue of variable selection consistency boils down to assessing whether $\Pi_n(\Gamma \neq \gamma^*_n \mid D_n) \to 0$ in some probabilistic manner.  
As indicated in the Introduction, the main result we prove in this paper is that when the cardinality of $\gamma^*_n$ remains fixed at a $d_0$ and $f^*_n$ is of finite Sobolev smoothness of order $\beta$, one has $\Pi_n(\Gamma \ne \gamma^*_n \mid D_n) \to 0$
as long as $\log (d_n) \precsim n^{d_0/(2\beta + d_0)}$ and the regressors are independent Gaussian variables with equal variance. Below we give a sketch of the argument of how such a claim can be made based on adaptive function estimation accuracy.


The question of variable selection 
could be directly related to that of function estimation quality as follows. Consider a small ball around the truth: $E_n = \{f : \rho(f, f^*_n) \le \rateTwo_n\}$ where $\rho$ is an appropriate metric and $\rateTwo_n > 0$. Notice that
\begin{equation}
  \label{eq:main-inequality}
  \Pi_n (\Gamma  \ne \gamma^*_n \mid D_n ) \le \Pi_n ( f \in E_n ^c  \mid D_n) +  \Pi_n(\Gamma  \ne \gamma^*_n, f \in E_n \mid D_n).
\end{equation} 
If it were known that the posterior on $f$ contracts to the truth under metric $\rho$ at a rate $\rateTwo_n$ or faster, then the first term on the right hand side would eventually vanish in probability. In order for the second term to vanish as well, one needs to establish that the same fast rate of contraction cannot be achieved under a wrong selection of variables. 

Partition the space of wrong selections into two parts:
$\{0,1\}^{d_n} \setminus \{\gamma^*_n \}
= \{\gamma \in \{0,1\}^{d_n}:
\gamma^*_n \not\leq \gamma\}
\cup \{\gamma \in \{0,1\}^{d_n}: \gamma^*_n < \gamma\}
=: \fn(\gamma^*_n) \cup \fp(\gamma^*_n)$, 
where the defining inequalities are taken to be coordinate-wise.
The {\it false negative} set $\fn(\gamma^*)$ 
consists of selections that miss at least one true predictor.
The {\it false positive} set $\fp(\gamma^*)$ contains selections that include all important variables and at least one unimportant regressor. 
Accordingly, the second posterior probability in \eqref{eq:main-inequality} splits into two pieces,
\begin{equation}
\begin{split}
 \Pi_n(\Gamma  \ne \gamma^*_n, f \in E_n \mid D_n) = \Pi_n&(\Gamma  \in \fn(\gamma^*_n), f \in E_n \mid D_n)\\
  & + \Pi_n(\Gamma  \in \fp(\gamma^*_n), f \in E_n \mid D_n),
\end{split} 
  \label{eq:2pieces}
\end{equation}
of which the first term could be expected to be exactly zero for large $n$ as long as one assumes that the signal strength of $f^*_n$ in any of its relevant variables, measured according to $\rho$, is above a fixed threshold $\delta_n \equiv \delta$ (Assumption \ref{assumption:sparsity} in Section \ref{sec:formal-statements}). Such an $f^*_n$ will be at least a $\delta$ distance away in metric $\rho$ from any $f$ whose sparsity pattern $\gamma \in \fn(\gamma^*_n)$.

No such separation exists for regression functions associated with selections in the false positive set.
For any $\gamma \in \fp(\gamma^*_n)$, one would expect the conditional posterior $\Pi_n(f \mid \Gamma = \gamma, D_n)$ to place considerable mass around the truth $f^*_n$. Any hope of the second term on the right hand side of \eqref{eq:2pieces} being small rests on establishing that such a conditional posterior would contract at a slower rate than $\rateTwo_n$. This is a legitimate hope because it is known that the minimax error rate of function estimation usually worsens when additional irrelevant variables are selected in the regression model \citep{stone1982optimal}. Therefore, assuming $f^*_n$ belongs to the class of $\beta$-smooth functions for some $\beta > 0$ (Assumption \ref{assumption:smoothness} part 1), a reasonable proof strategy would be to consider $\rateTwo_n = M \rateOne_n (\log n)^\kappa$ for some $M, \kappa > 0$ where $\rateOne_n = n^{-\beta / (2\beta + d_0)}$ is the usual minimax rate of estimation of an $f^*$ of $\beta$-smooth functions, and establish a polynomial difference in contraction rates between the overall posterior and the conditional posteriors under false positive selection.

In Sections \ref{sec:formal-statements} and \ref{sec:main-result} we establish the above rate difference result and give a formal proof of variable selection consistency under a rescaled GP prior with stochastic variable selection. Establishing slower posterior contraction rates under false positive selection necessitates working with the $L_2(Q_n)$ metric $\rho(f, f') = \|f - f'\|_{L_2(Q_n)} = \{\int (f - f')^2 dQ_n\}^{1/2}$. This choice of metric is slightly different than those considered in \citep{van2009adaptive, yang2015minimax}. Consequently, a new proof is needed to establish overall posterior contraction at a rate of $\rateTwo_n$. To show that posterior contraction rate becomes polynomially slower under false positive selection, we have to make two important assumptions: $f^*_n$ is exactly $\beta$-smooth and no smoother (Assumption \ref{assumption:smoothness} part 2) and $Q_n$ is the mean-zero Gaussian measure on $\R^{d_n}$ with covariance matrix $\xi^2\cdot I_{d_n}$ (Assumption \ref{assumption:q}). The finite smoothness of $f^*_n$ is indeed necessary to ensure that the conditional prior $\Pi(f \mid \Gamma = \gamma)$ sits less densely around $f^*_n$ when $\gamma \in \fp(\gamma^*_n)$ than when $\gamma = \gamma^*_n$. But the assumption of an independent Gaussian design with a fixed variance is a technical convenience that enables sharp calculations of the concentration probabilities of rescaled GP laws, which are necessary for establishing the above result. Additionally, the stochastic variable selection prior is assumed to favor small models, to control the total posterior probability of the exponentially growing false positive set (Assumption \ref{assumption:Gamma}). 

\relax

\section{Main Result} 
\label{sec:formal-statements}
Toward a formal and rigorous treatment of the arguments presented above, we first state the necessary assumptions and the main variable consistency result.  \h{A lengthy discussion of the assumptions is delayed to Section \ref{sec:discussion}. Supporting results on the posterior contraction rates and difference in such rates under correct and false positive selections are presented in Section \ref{sec:main-result}, relying upon the sharp small ball probability calculations in Section \ref{sec:sbp}.}


An important assumption, needed essentially for technical reasons, is that the design distribution is uncorrelated Gaussian.

\begin{assumption}[Gaussian random design]
\label{assumption:q}
The design measure $Q_n = G_{d_n}$ where $G_d$ denotes the $d$-variate Gaussian measure
$N_d(0, \xi^2 I_{d})$. 
The variance $\xi^2 >2/e$ may be unknown but does not change with $n$.
\end{assumption}

For each sample size $n$, the true data generating distribution $\bP^*_n$ is taken to be an element of the model space identified by $f = f^*_{n} \in \cL_2(Q_n)$ where  
$\cL_2(Q_n) := C(\R^{d_n}) \cap L_2(Q_n)$.  

Additionally, we assume that the sequence $(f^*_{n}: n \ge 1)$ remains essentially the same across $n$, formalized as below. 
For notational convenience, for any $d \in \N$ and $\gamma \in \{0,1\}^d$, let $T_\gamma: C(\R^{|\gamma|}) \to C(\R^d)$ denote the function embedding operator: $(T_\gamma f)(x) = f(x_\gamma)$, $f \in C(\R^{|\gamma|})$, $x \in \R^d$.
With $T_\gamma$, any high dimensional functions with redundant variables can be decomposed into a low dimensional function without any redundant variables and a variable inclusion vector. 
Smoothness conditions are directly imposed to the low dimensional functions;
sparsity and dimensionality assumptions are made on the variable inclusion vector. 

\begin{assumption}[Finite sparsity of true regression function]
\label{assumption:f0}
There exist $n_0, d_0 \in \N$, $f_0 \in \cL_2(Q_0)$ and a sequence of binary vectors $\gamma^*_{n} \in \{0,1\}^{d_n}$, such that $d_n \ge d_0$, $|\gamma^*_n| = d_0$ and $f^*_n = T_{\gamma^*_n} f_0$ for all $n \ge n_0$.
\end{assumption}

Assumption \ref{assumption:f0} makes it clear that for all large $n$, the true function is sparse and the support size $d_0$ does not grow with sample size. To avoid any ambiguity about the true sparsity level $d_0$, it is important to identify it as the minimal support size for the sequence $(f^*_n: n \ge 1)$. This is done via the next assumption on signal strength which ensures that each of the $d_0$ inputs to $f_0$ results in a variability that is detectable in the $L_2$ topology. Toward this, for each $j \in \{1, \ldots, d_0\}$, define $f_{0j} \in \cL_2(Q_0)$ as the projection of $f_0$ perpendicular to the $j$-th axis, given by $f_{0j}(x) := \int_\R f_0(x_1, \ldots, x_{j-1}, z, x_{j+1}, \ldots, x_{d_0}) dG_1(z)$, $x \in \R^{d_0}$.

\begin{assumption}[Signal strength is $L_2$ detectable]
  \label{assumption:sparsity} 
The minimum signal strength in the relevant variables $\delta := \min_{1\le j \le d_0} \|f_0 - f_{0j}\|_{L_2(G_{d_0})}^2$ is strictly positive.
\end{assumption}  
An immediate consequence of  Assumption \ref{assumption:sparsity} is that for any $n \ge n_0$,  
$f^*_n$ is at least a $\delta$ distance away from any $f \in \cL_2(Q_n)$ 
that is constant along at least one axis $j$ for which $\gamma^*_{n,j} = 1$. 
More formally, for any $n \ge n_0$, and any $\gamma \in \{0,1\}^{d_n}$ with $\gamma^*_n \not\le \gamma$, 
$\inf \{\|f^*_n - f\|_{L_2(Q_n)}^2: f \in T_\gamma C(\R^{|\gamma|}) 
\cap \cL_2(Q_n) \}\ge \delta$. 
To see why, without loss of generality, consider the toy example of $f_0(x_1,x_2)$ and $f(x_1,x_3)$ where $x_1$ and $x_2$ are
  correct variables and $x_3$ is the redundant variable.
  Then we can compute 
  $\|f_0 - f\|_{L_2(Q)}^2 = \bE[(f_0(X_1,X_2) - f(X_1,X_3))^2]
  =\bE[(f_0(X_1,X_2) -f_{0,2}(X_1) + f_{0,2}(X_1) -  f(X_1,X_3))^2]  
  $
  whose cross-term 
  $\bE[(f_0(X_1,X_2) -f_{0,2}(X_1))
  (f_{0,2}(X_1) -  f(X_1,X_3))] = 0$ holds because
  $X_i$'s are independent.

Next, we formalize the notion that the true regression function is $\beta$-smooth but no smoother.  For any $d \in \N$, $\beta > 0$, let $H^\beta(\R^d)$ denote the Sobolev space of functions $h : \R^d \to \R$ with norm $\|h\|_{H^\beta (\R^d)}$ given by
  \begin{equation}
    \label{eq:sobolev-norm}
    \|h\|^2_{H^\beta (\R^d)} := \int_{\R^d}
    |\hat h (\lambda)|^2 (1 + \|\lambda\|_2^2)^\beta d\lambda < \infty    
  \end{equation}
where $\hat h$
is the Fourier transform\footnote{$\hat h(\lambda) = (2\pi)^{-d} \int_{\R^d} e^{-i(\lambda, t)} h(t)dt$, $\lambda \in \R^d$.} of $h$. Recall that functions $h \in H^\beta(\R^d)$ have square-integrable, (weak) derivatives $D^{(k)}f$, $k = (k_1, \ldots, k_d) \in \N_0^d$, of order $|k| \le \beta$.  
\begin{assumption}[Smoothness of $f_0$]
  \label{assumption:smoothness}
  The true function $f_0$ satisfies 
  \begin{enumerate}
  \item There exists a $\beta > d_0/2$ such that $f_0 \in H^\beta(\R^{d_0}) \cap L_2(Q_0)$. 
  \item There also exists an $\alpha \in (\beta, \beta(1 + 1/d_0))$ such that  $|\widehat {f_0 \sqrt{ g_{d_0}}} (\lambda)|  
    \succsim \|\lambda\|^{-(\alpha+d_0/2)}_2 $ 
    for every $\lambda \in \R^{d_0}$ with $\|\lambda\|_2 \ge 1$,
    where $g_{d_0}$ is the probability density function of $G_{d_0}$. 
  \end{enumerate}
\end{assumption}
Part 2 of the assumption ensures that $f_0 \not\in H^b(\R^{d_0})$ for any $b > \alpha$ and hence has limited regularity which is important in establishing that the posterior contraction rate at $f^*_n$ is polynomially slower under false positive inclusion. 

The posterior contraction rates also depend on the rescaling measures $\pi_{n,d}$, $0 \le d \le d_n$. The following assumption is mildly adapted from \citep{van2009adaptive}. The modification is needed in part because in determining sharp upper bounds on the concentration probabilities of rescaled GP priors, one needs to integrate over the entire range of the rescaling parameter. Below, with a slight abuse of notation, we let $\pi_{n,d}$ also denote the probability density function underlying the eponymous rescaling measure. 
\begin{assumption}[Rescaling measures] 
  \label{assumption:A} For each $d \in \N$, there exist constants
  $C_1$, $C_2$, $C_3$, $D_1$ and $D_2$, all independent of $d$, such that
  \begin{enumerate}
  \item for all sufficiently large $a$, 
    $\pi_{n,d} (a)  \ge D_1 e^{ - C_1 a^d \log^{d+1} (a)}$; 
  \item for every $a>1/\xi$,
    $\pi_{n,d}(a) \le D_2 a^{d-1} e^{ - C_2 a^d (\log^{d+1} (a) \vee 1) + C_3 \log (d)}$;
  \item $\pi_{n,d}(0, \xi^{-1}) =0$. 
\end{enumerate}  
\end{assumption} 
This assumption is satisfied, for example, when $\pi_{n,d}$ is the truncation to $(\xi^{-1}, \infty)$ of the probability law of a random variable $A$ for which $A^{d}\log^{d+1}(A)$ has an exponential distribution with a rate parameter that is constant in $n$ and $d$.

Our next assumption regulates how fast the design dimension $d_n$ can grow in $n$. If the support $\gamma^*_n$ were known, the $L_2(Q_n)$ minimax estimation error of a $\beta$-smooth function would be of the order of $\rateOne_n = n^{ - 1/( {2 + {d_0}/\beta })}$ \citep{stone1982optimal}. Not knowing the support means that one incurs an additional error of the order of $d_0 \log(d_n/d_0) / n$ for having to carry out variable selection. We require this additional error to not overwhelm the original estimation error.
\begin{assumption}[Growth of $d_n$] 
  \label{assumption:d}  
 The design dimension $d_n$ satisfies 
 $\log (d_n) \precsim n \rateOne_n^2 \asymp n^{d_0/(2\beta + d_0)}$. 
\end{assumption} 

A final assumption is needed on the sparsity induced by the prior distribution. In particular it is needed that the prior on $\Gamma$ favors small selection sizes and heavily penalizes extremely large selections. 
\begin{assumption}[Prior sparsity] 
  \label{assumption:Gamma} 
  For all sufficiently large $n$,
  \begin{enumerate}
  \item  $q_n(d_0) \ge \exp\{-n\rateOne_n^2\} \asymp \exp\{-n^{d_0 / (2\beta + d_0)}\}$,
  \item $q_n(d) \le \exp\{-C d^\rho\}$ for every $d \succsim  n^{2\beta/\{\alpha(2\beta+d_0)\}}$, for some constants $C > 0$ and $\rho \ge (d_0 + 1)/2$. 
  \end{enumerate}
\end{assumption} 
Assumption \ref{assumption:Gamma} seemingly requires the knowledge of the true support size $d_0$,
but one can relax this by letting $\rho$ grow slowly as sample size increases. 
The assumption would hold, for instance if one chose a prior that caps the selection size $|\Gamma|$ at an $m_n \le d_n$ and let $m_n$ grow slowly with $n$, e.g., $m_n \asymp n^{1/\log\log n}$.
Formally, $q_n(d) \propto I(d < n^{1/\log\log n})$, $d = 0, \ldots, d_n$. An alternative is to not use a cap, but employ aggressive penalization of larger selections: $q_n(d) \propto d^{k \log\log n -1}\exp\{-d^{k\log\log n}\}$, $0 \le d \le d_n$, for some constant $k$. The latter choice is equivalent to an appropriately tuned Beta-Binomial prior on individual regressor inclusion. 

Building upon these formal assumptions, we are able to offer the following rigorous statement and proof of variable selection consistency.
\begin{theorem}
  \label{thm:main}
Under Assumptions \ref{assumption:q}-\ref{assumption:Gamma},
$\bP^*_n \left[ \Pi_n (\Gamma  \ne \gamma^*_{n} \mid D_n) \right] \to 0$,
as $n \to \infty$.
\end{theorem}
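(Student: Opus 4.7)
The plan is to implement the strategy sketched in Section \ref{sec:general-approach}, turning each heuristic step into a quantitative one. Fix $\rateTwo_n = M \rateOne_n (\log n)^\kappa$ with constants $M, \kappa > 0$ to be chosen, and let $E_n = \{f : \|f - f^*_n\|_{L_2(Q_n)} \le \rateTwo_n\}$. Applying \eqref{eq:main-inequality} together with \eqref{eq:2pieces}, it suffices to show that each of (i) $\Pi_n(E_n^c \mid D_n)$, (ii) $\Pi_n(\Gamma \in \fn(\gamma^*_n),\, f \in E_n \mid D_n)$, and (iii) $\Pi_n(\Gamma \in \fp(\gamma^*_n),\, f \in E_n \mid D_n)$ tends to zero in $\bP^*_n$-probability; since all three quantities are bounded by one, the corresponding expectations then vanish by dominated convergence, giving the theorem.

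For piece (i), I would invoke the Schwartz--Ghosal--Shen--van der Vaart posterior contraction machinery. The prior mass condition is verified by conditioning on $\Gamma = \gamma^*_n$ and applying a concentration lower bound in the spirit of \citet{van2009adaptive} at a suitably tuned range of bandwidths; the $q_n(d_0)$ factor needed to convert the conditional bound into an unconditional one is absorbed by Assumption \ref{assumption:Gamma}(1), while the $(\log n)^\kappa$ slack in $\rateTwo_n$ accommodates, among other things, the passage from supremum norm to $\|\cdot\|_{L_2(Q_n)}$ against a Gaussian design with unbounded support. The sieve is built, for each $\gamma$, by truncating the bandwidth and invoking Borell--TIS tail bounds; its prior mass is controlled via Assumptions \ref{assumption:A} and \ref{assumption:Gamma}(2), and its $\|\cdot\|_\infty$ entropy transfers to $\|\cdot\|_{L_2(Q_n)}$ since the latter is dominated by the former.

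Piece (ii) is immediate: by Assumption \ref{assumption:sparsity} and the Pythagorean identity displayed after it, for any $\gamma \in \fn(\gamma^*_n)$ and any $f$ supported on coordinates in $\gamma$ one has $\|f - f^*_n\|_{L_2(Q_n)}^2 \ge \delta$, so the event is empty once $\rateTwo_n^2 < \delta$. Piece (iii) is the heart of the proof. Writing the posterior as a ratio of likelihood integrals, a standard denominator lower bound of order $q_n(d_0)\exp(-c_1 n\rateTwo_n^2)$ holds on a $\bP^*_n$-event of probability tending to one. By Fubini, the expected numerator is bounded by $\sum_{\gamma \in \fp(\gamma^*_n)} \Pr(\Gamma = \gamma)\, \Pi_n(f \in E_n \mid \Gamma = \gamma)$, and by the rotational symmetry of the spherical Gaussian design the conditional small-ball probability depends on $\gamma$ only through $|\gamma|$. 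The key step is to bound this small-ball probability from above using Section \ref{sec:sbp}: Assumption \ref{assumption:smoothness}(2) translates the limited regularity $\alpha$ of $f_0$ into a strict polynomial slowdown in the small-ball exponent whenever $\gamma \supsetneq \gamma^*_n$. The sum over $\fp(\gamma^*_n)$ is split by $d = |\gamma|$: for $d \precsim n^{2\beta/\{\alpha(2\beta + d_0)\}}$ the combinatorial factor $\binom{d_n}{d}$ is controlled using Assumption \ref{assumption:d}, while for larger $d$, Assumption \ref{assumption:Gamma}(2) defeats the combinatorics directly.

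The principal obstacle lies in piece (iii), and within it, the sharp upper bound on the small-ball probability of a rescaled squared-exponential GP at \emph{every} rescaling level, rather than only at the favourable bandwidths exploited for lower bounds. This is precisely where the independent Gaussian design (Assumption \ref{assumption:q}) becomes indispensable: it reduces the RKHS of the squared-exponential kernel to an explicit $\ell_2$-ellipsoid whose metric entropy is computable, so that the Kuelbs--Li metric-entropy duality can be inverted into quantitative small-ball estimates sharply parameterised by $\alpha$. Once those bounds are in hand, everything downstream reduces to a careful balancing of constants among $M$, $\kappa$, $\alpha - \beta$, $d_0$, and the prior hyperparameters in Assumptions \ref{assumption:A} and \ref{assumption:Gamma}.
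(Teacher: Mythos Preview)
Your proposal is correct and follows essentially the same route as the paper: the identical three-term decomposition via \eqref{eq:main-inequality}--\eqref{eq:2pieces}, Schwartz-type contraction (Theorem~\ref{thm:overall-contraction}) for piece~(i), Assumption~\ref{assumption:sparsity} for piece~(ii), and for piece~(iii) the prior-ratio argument of Proposition~\ref{proposition:FP-prior-prob} via \eqref{eq:castillo2008}, with your dimension split at $|\gamma|\asymp \varepsilon_n^{-2/\alpha}=n^{2\beta/\{\alpha(2\beta+d_0)\}}$ matching the paper's Regime~III threshold. One small caveat worth noting: the paper carries out \emph{all} small-ball and sieve-entropy calculations directly in $L_2(Q_n)$ via the Karhunen--Lo\`eve ellipsoid rather than by transfer from sup-norm results, since the existing $\|\cdot\|_\infty$ bounds of \cite{van2009adaptive} are stated on compact domains and do not apply to a stationary Gaussian process on $\R^{d_0}$; your remark about sup-norm transfer in piece~(i) would need to be replaced by this direct route.
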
  

\begin{proof}
  As before, let $E_n = \{f \in \cL_2(Q_n): \|f - f^*_n\|_{L_2(Q_n)} \le \rateTwo_n\}$ where $\rateTwo_n = \rateOne_n (\log n)^\kappa$ with $\kappa = (d_0 + 1)/(2 + d_0/\beta)$. Consider the upper bound on $\Pi_n(\Gamma \neq \gamma^*_n \mid D_n)$ jointly given in \eqref{eq:main-inequality}
  and \eqref{eq:2pieces}. By Theorem \ref{thm:overall-contraction} in the next section, $\Pi_n(f \in E_n^c \mid D_n) \to 0$ in probability as $n \to \infty$. By Assumption \ref{assumption:sparsity}, the first term of the bound given by \eqref{eq:2pieces} is exactly zero for all large $n$ because the prior probability $\Pi_n(\Gamma \in \fn(\gamma^*_n), f \in E_n) = 0$ whenever $\rateTwo_n < \delta$. The second piece of this bound vanishes in probability by Proposition \ref{proposition:FP-prior-prob}, which leverages on detailed calculations of concentration properties of Gaussian process laws presented in Section \ref{sec:sbp}.
\end{proof}
\relax

\section{Posterior Concentration via Schwartz Theory}  
\label{sec:main-result}
This Section presents supporting results for the proof of Theorem \ref{thm:main}.
As the proof technique is standard, details of the proofs are in the Supplement. 

\begin{theorem}
  \label{thm:overall-contraction}
  Under Assumptions \ref{assumption:q}, \ref{assumption:f0}, \ref{assumption:smoothness}-1, \ref{assumption:d} and \ref{assumption:A},
  let $\rateTwo_n = \rateOne_n (\log n)^\kappa$ 
  with $\kappa = (d_0+1)/\left( {2 + {d_0}/\beta } \right)$ 
  for $n \ge 1$, 
  then for any sufficiently large constant $M$,
  \[ \bP_n^* [\Pi_n ( f \in \cL_2(Q_n):
    \|f - f^*_n\|_{L_2(Q_n)} > M \rateTwo_n\}
    \mid D_n)] 
    \to 0, 
    \text{ as } n\to \infty. \]
\end{theorem}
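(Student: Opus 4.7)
The plan is to invoke the general posterior contraction theorem for independent, non-identically distributed observations (e.g., Theorem~8.9 of \citep{ghosal2017fundamentals}) applied to the marginal prior on $f$ induced by $\Pi_n$. Because the errors are $N(0,\sigma^2)$ with fixed known $\sigma$, both the Kullback--Leibler divergence and its variance between the joint laws at $f$ and $f^*_n$ are bounded by a multiple of $\|f - f^*_n\|^2_{L_2(Q_n)}$, so the three standard requirements can be stated in the single metric $\rho = \|\cdot\|_{L_2(Q_n)}$: (P) a prior mass bound $\Pi_n(\rho(f,f^*_n) \le \rateTwo_n) \ge e^{-C_0 n\rateTwo_n^2}$; (S) a sieve $\mathcal{F}_n$ with $\log N(\rateTwo_n, \mathcal{F}_n, \rho) \le C_1 n\rateTwo_n^2$; and (T) a sieve-mass bound $\Pi_n(f \notin \mathcal{F}_n) \le e^{-(C_0 + 4)n\rateTwo_n^2}$. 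Standard exponentially consistent tests for $L_2(Q_n)$-balls in Gaussian regression then close the proof.

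For (P), I would condition on $\{\Gamma = \gamma^*_n\}$, under which $f = T_{\gamma^*_n} g$ with $g \sim \segp(\R^{d_0}; \mathbf{1}_{d_0}, \pi_{n,d_0})$. The product structure of $Q_n = G_{d_n}$ yields the key identity $\|T_{\gamma^*_n} g - f^*_n\|_{L_2(Q_n)} = \|g - f_0\|_{L_2(G_{d_0})}$, reducing prior concentration to a decentered small-ball probability for a $d_0$-variate rescaled squared-exponential GP at the $\beta$-smooth $f_0$. The lower bound of \citep{van2009adaptive}, combined with Assumption~\ref{assumption:A}-1 and the choice $\rateTwo_n = \rateOne_n(\log n)^\kappa$ with the stated $\kappa$ calibrated to balance squared bias against the $d_0$-dimensional small-ball penalty, gives $\Pr(\|g - f_0\|_{L_2(G_{d_0})} \le \rateTwo_n) \ge e^{-c_1 n\rateTwo_n^2}$. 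Multiplying by $\Pi_n(\Gamma = \gamma^*_n) = q_n(d_0)/\binom{d_n}{d_0}$ and invoking Assumption~\ref{assumption:Gamma}-1 together with Assumption~\ref{assumption:d} (so $\log \binom{d_n}{d_0} \le d_0 \log d_n \precsim n\rateOne_n^2$) establishes (P).

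For (S) and (T), I take $\mathcal{F}_n = \bigcup_{|\gamma| \le m_n} \mathcal{F}_{n,\gamma}$, with $m_n \asymp n^{2\beta/\{\alpha(2\beta + d_0)\}}$ chosen as the threshold in Assumption~\ref{assumption:Gamma}-2, and $\mathcal{F}_{n,\gamma} = T_\gamma \mathcal{G}_{n,|\gamma|}$, where $\mathcal{G}_{n,d}$ is a Borell-type sieve for a $\pi_{n,d}$-rescaled GP on $\R^{d}$ truncated to a bounded range of rescaling values and intersected with a large sup-norm ball, in the style of \citep[Section~3]{van2009adaptive}. Since $\|T_\gamma h\|_{L_2(Q_n)} = \|h\|_{L_2(G_{|\gamma|})} \le \|h\|_\infty$, sup-norm entropy dominates $\rho$-entropy, so the van der Vaart--van Zanten estimate gives $\log N(\rateTwo_n, \mathcal{F}_{n,\gamma}, \rho) \precsim n\rateTwo_n^2$; the remaining combinatorial term $\log\sum_{d \le m_n}\binom{d_n}{d} \le m_n \log d_n$ is also $\precsim n\rateTwo_n^2$ by Assumption~\ref{assumption:d}, verifying (S). For (T), the complement splits into $\{|\Gamma| > m_n\}$ (exponentially small by Assumption~\ref{assumption:Gamma}-2), the rescaling tail for each admissible $\gamma$ (controlled by the upper bound in Assumption~\ref{assumption:A}-2), and the Borell tail of the centered GP outside its RKHS ball; summing gives the required exponential bound.

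The main obstacle is the switch to $\rho = \|\cdot\|_{L_2(G_{d_n})}$. The posterior-contraction arguments in \citep{van2009adaptive, yang2015minimax} are developed in sup-norm or empirical $L_2$ on compact domains and do not translate directly to a noncompact Gaussian design on $\R^{d_n}$. Assumption~\ref{assumption:q} is precisely what makes the factorization $\|T_\gamma h\|_{L_2(Q_n)} = \|h\|_{L_2(G_{|\gamma|})}$ valid, simultaneously bridging the two metrics and isolating a low-dimensional small-ball subproblem to which the existing GP concentration estimates apply. A secondary difficulty is the model-selection bookkeeping: the combinatorial factors $\binom{d_n}{|\gamma|}$ must be absorbed by the combined effect of Assumptions~\ref{assumption:d}, \ref{assumption:Gamma}-1, and \ref{assumption:Gamma}-2, and I expect most of the genuinely new technical work to lie in tracking constants inside the exponents carefully enough that (P), (S), and (T) are satisfied with mutually compatible $C_0, C_1$.
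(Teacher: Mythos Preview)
Your high-level architecture---verify prior mass, entropy, and sieve-complement conditions for the Ghosal--Ghosh--van der Vaart theorem, conditioning on $\{\Gamma=\gamma^*_n\}$ for (P) and taking a union of Borell-type sieves over $\gamma$ for (S) and (T)---matches the paper's proof exactly. The gap is in how you propose to obtain the required concentration and entropy estimates.

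You suggest reducing to sup-norm via $\|h\|_{L_2(G_d)}\le\|h\|_\infty$ and then importing the bounds of \cite{van2009adaptive}. This does not go through: the design is $G_{d_n}$ on all of $\R^{d_n}$, not a compact cube, and sample paths of a stationary squared-exponential process on $\R^d$ are almost surely unbounded (values at well-separated points are nearly independent $N(0,1)$'s). Consequently a ``large sup-norm ball'' captures zero GP mass, Borell's inequality in $\|\cdot\|_\infty$ is vacuous, and the small-ball and RKHS-entropy results of \cite{van2009adaptive}, which are formulated for $(C([0,1]^d),\|\cdot\|_\infty)$, do not transfer. Your closing paragraph correctly flags the metric switch as the obstacle, but the proposed fix is precisely where the argument fails.

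The paper handles this by working natively in $L_2(G_d)$. Under Assumption~\ref{assumption:q} the squared-exponential kernel has an explicit Karhunen--Lo\`eve expansion with respect to $G_d$, so the RKHS unit ball is isometric to an $\ell_2$ ellipsoid with known semi-axes. The metric entropy of that ellipsoid in $\|\cdot\|_{L_2(G_d)}$ is computed directly (Lemma~\ref{lemma:metric-entropy}), the centered small-ball exponent follows via the Kuelbs--Li link (Lemma~\ref{lemma:centered-sbp}), and the decentering term is bounded through the spectral representation of the RKHS (Lemma~\ref{lemma:decentering-upper-bd}). These three $L_2(G_d)$-specific lemmas---not the $\|\cdot\|_\infty$ results you cite---are what feed into (P), (S), (T); the sieve is $M_n\sqrt{r_n}\,\cH_1^{r_n,\gamma}+\varepsilon_n\bB_{1,\gamma}$ with $\bB_{1,\gamma}$ the $L_2$ unit ball, and Borell's inequality is applied in $L_2(G_d)$. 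The cap on $|\gamma|$ is $\underline d_n=C(n\varepsilon_n^2)^{1/\rho}$, not the $\alpha$-dependent $m_n$ you propose (note $\alpha$ is not even available under the theorem's hypotheses, which invoke only part~1 of Assumption~\ref{assumption:smoothness}).
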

A proof of this result is presented in the Supplement 
by verifying Theorem 2.1 of \cite{ghosal2000convergence}.
In the proof, we first verify the Kullback-Leibler prior mass condition that for all sufficiently large $n$,
\begin{equation}
\label{eq:kl}
\Pi_n(f \in B_n(f_n^*,\rateTwo_n)) \ge e^{- n\rateTwo _n^2}
\end{equation}
where for any $g \in \cL_2(Q_n)$ and $\epsilon > 0$,
one defines $B_n(g, \epsilon) = \{f : K(\bP^1_g, \bP^1_f) \le \epsilon^2,
V(\bP^1_g, \bP^1_f) \le \epsilon^2\}$,
with $\bP^1_f$ denoting the probability distribution of a single observation pair $(X_1, Y_1)$ under the model element with regression function $f$,
and, $K(\bP^1_g, \bP^1_f) = \bP^1_g \log (d\bP^1_g/d\bP^1_f)$,
$V(\bP^1_g, \bP^1_f) = \bP^1_g [(\log (d\bP^1_g/d\bP^1_f))^2]
- K(\bP^1_g, \bP^1_f)^2$.
Next, a sieve $\bB_n \subset L_2(Q_n)$, $n \in \N$ is produced such that 
$\Pi_n ( f \notin \bB_n) \le \exp( - 4n\rateTwo_n^2)$
and $\log N( \rateTwo _n, \bB_n ,\| \cdot \|_{L_2(Q_n)}) \le n\rateTwo_n^2$
for all sufficiently large $n$.
Here, $N(\epsilon, S, \rho)$ is used to denote the $\epsilon$-covering number of a subset $S$ in a metric space with metric $\rho$.

Our statement and proof technique for Theorem \ref{thm:overall-contraction} mirror the near minimax-optimal posterior contraction results and proofs of \cite{van2009adaptive, yang2015minimax}, but are slightly novel in its use of the $L_2(Q_n)$ topology on the space of regression functions. In particular, under the stochastic design assumption, one has the simplification that $B_n(g, \epsilon) = \{f \in \cL_2(Q_n): \|f - g\|_{L_2(Q_n)} \le \epsilon\})$. Thus the Kullback-Leibler prior mass condition translates to a simpler $L_2(Q_n)$ prior mass condition:
\begin{equation}
\label{eq:l2}
\Pi_n(\{f \in \cL_2(Q_n) : \|f - f^*_n\|_{L_2(Q_n)} \le \rateTwo_n\}) \ge e^{-n\epsilon_n^2}.
\end{equation}

\begin{proposition}
  \label{proposition:FP-prior-prob}
    Under Assumptions \ref{assumption:q}, 
  \ref{assumption:f0},
  \ref{assumption:smoothness}-2, 
  \ref{assumption:A} and \ref{assumption:Gamma},
  let $E_n = \{f \in \cL_2(Q_n): \|f - f^*_n\|_{L_2(Q_n)} \le M \rateTwo_n\}$, 
   $\rateTwo_n = \rateOne_n (\log n)^\kappa$ 
  with $\kappa = (d_0+1)/\left( {2 + {d_0}/\beta } \right)$ 
  for $n \ge 1$, 
  then 
  for all sufficiently large constant $M$, 
  \[ \bP^*_n [ \Pi_n(\Gamma  \in \fp(\gamma^*_n),
    f \in E_n \mid D_n)] 
    \to 0,
    \text{ as } n\to \infty. \] 
\end{proposition}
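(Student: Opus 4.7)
The plan is to apply the standard Schwartz-style Bayes ratio argument, moving all the work onto a prior-mass inequality whose key input is the sharp $L_2$ small-ball probability upper bound for rescaled SEGPs developed in Section \ref{sec:sbp}. Write
\[
\Pi_n(\Gamma \in \fp(\gamma^*_n), f \in E_n \mid D_n) = \frac{N_n}{D_n}
\]
as the ratio of integrals of the likelihood ratio $\prod_{i=1}^n (p_f/p_{f^*_n})(X_i, Y_i)$ against the restricted and full priors, respectively. The KL prior mass bound \eqref{eq:kl} established in the proof of Theorem \ref{thm:overall-contraction}, together with the usual evidence lower bound \citep{ghosal2000convergence}, gives $D_n \ge e^{-(2+c_0) n \rateTwo_n^2}$ on an event of $\bP^*_n$-probability tending to one, for any fixed $c_0 > 0$. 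By Fubini and the identity $\bE^*_n[\prod_i (p_f/p_{f^*_n})(X_i, Y_i)] = 1$, $\bE^*_n[N_n] \le \Pi_n(\Gamma \in \fp(\gamma^*_n), f \in E_n)$. Hence it suffices to show that this prior mass is $o(e^{-(2 + c_0) n \rateTwo_n^2})$.

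I would organize the prior mass by the inclusion size $d = |\gamma|$. Since $\sum_{\gamma \supset \gamma^*_n,\, |\gamma| = d} \Pi_n(\Gamma = \gamma) \le q_n(d)$,
\[
\Pi_n(\Gamma \in \fp(\gamma^*_n), f \in E_n) \le \sum_{d = d_0 + 1}^{d_n} q_n(d) \sup_{\gamma \supset \gamma^*_n,\, |\gamma| = d} \Pi_n(f \in E_n \mid \Gamma = \gamma),
\]
and split the sum at the threshold $d^\dagger_n \asymp n^{2\beta/[\alpha(2\beta + d_0)]}$ from Assumption \ref{assumption:Gamma} part 2. For $d \ge d^\dagger_n$, the bound $q_n(d) \le \exp\{-C d^\rho\}$ with $\rho \ge (d_0+1)/2$ controls the tail contribution by $\exp\{-C(d^\dagger_n)^{(d_0+1)/2}\}$, which is polynomially smaller than $e^{-(2+c_0)n\rateTwo_n^2}$ exactly when $\alpha < \beta(1 + 1/d_0)$ from Assumption \ref{assumption:smoothness} part 2.

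For $d_0 < d < d^\dagger_n$ the core ingredient is the sharp $L_2(Q_n)$ small-ball upper bound for the $\gamma$-sparse, $\pi_{n,d}$-rescaled SEGP around $f^*_n$ delivered by Section \ref{sec:sbp}. Under the Gaussian design (Assumption \ref{assumption:q}), the exact-$\alpha$-smoothness bound on $f_0$ (Assumption \ref{assumption:smoothness} part 2), and the tail behavior of $\pi_{n,d}$ (Assumption \ref{assumption:A}), this should yield
\[
\sup_{\gamma \supset \gamma^*_n,\, |\gamma| = d} \Pi_n(f \in E_n \mid \Gamma = \gamma) \le \exp\left\{-c\, \rateTwo_n^{-d/\alpha}\, (\log(1/\rateTwo_n))^{-c'}\right\}
\]
for each such $d$. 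With $\rateTwo_n = \rateOne_n (\log n)^\kappa$, the exponent is of order $n^{d \beta /[\alpha(2\beta + d_0)]}$ up to log factors, and the same inequality $\alpha < \beta(1 + 1/d_0)$ implies $d\beta/\alpha > d_0$ for every $d \ge d_0 + 1$, so this bound is polynomially stronger than $e^{-(2+c_0)n\rateTwo_n^2}$ in the exponent. Summing against $q_n(d) \le 1$ over the remaining $O(d^\dagger_n)$ values of $d$ therefore keeps the whole contribution negligible.

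The main obstacle is precisely the sharp small-ball upper bound invoked in the last step. Standard near-minimax contraction arguments such as \cite{van2009adaptive} supply only small-ball \emph{lower} bounds at one carefully chosen rescaling level, which is inadequate here because the posterior under a false positive selection can exploit any rescaling level. Proving a matching \emph{upper} bound at every $A > \xi^{-1}$ and integrating it against $\pi_{n,d}$ is the technical core of Section \ref{sec:sbp}, reduced there to bounding the metric entropy of the unit RKHS ball; the Gaussian design assumption is what makes this entropy calculation sharp enough to drive the polynomial slowdown under false positive inclusion.
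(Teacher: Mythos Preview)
Your outline matches the paper's proof: a Schwartz-type ratio bound reducing to the prior-mass inequality \eqref{eq:castillo2008}, a split of the false-positive sum at the threshold $d^\dagger_n\asymp \rateTwo_n^{-2/\alpha}$, with Assumption~\ref{assumption:Gamma}-2 handling $|\gamma|\ge d^\dagger_n$ and the sharp small-ball upper bounds handling $d_0<|\gamma|<d^\dagger_n$, the constraint $\alpha<\beta(1+1/d_0)$ driving both pieces.

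One point deserves more care. Section~\ref{sec:sbp} supplies per-level bounds (Lemmas~\ref{lemma:centered-sbp} and~\ref{lemma:decentering}) for \emph{fixed} $a$; the displayed bound $\exp\{-c\,\rateTwo_n^{-d/\alpha}(\log(1/\rateTwo_n))^{-c'}\}$ on the \emph{integrated} conditional probability $\Pi_n(f\in E_n\mid\Gamma=\gamma)$ is not delivered there and has to be produced inside the proof. The paper splits the $a$-integral at $\tau_n\asymp\rateTwo_n^{-1/\beta}$, controls the tail $\{A>\tau_n\}$ via Assumption~\ref{assumption:A}, and on $[1/\xi,\tau_n]$ maximizes the exponent $a\mapsto C(c_\xi a)^{|\gamma|}\rateTwo_n^{2}\exp\{c\,\rateTwo_n^{-2/\alpha}/a^2\}+C_2 a^{|\gamma|}$ explicitly via the Lambert $W$ function. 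This optimization forces a further split of the ``middle'' range into two sub-regimes, roughly $|\gamma|\lesssim\log(1/\rateTwo_n)$ and $\log(1/\rateTwo_n)\lesssim|\gamma|\lesssim\rateTwo_n^{-2/\alpha}$, with different resulting exponents; your stated form holds only in the first sub-regime. The final conclusion is unaffected because the binding case is $|\gamma|=d_0+1$, which sits in the first sub-regime, but the uniform bound you wrote down is not what actually comes out for larger $|\gamma|$.
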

A proof is given in the Supplement. 
It follows Lemma 1 of \cite{castillo2008lower} and establishes that   
\begin{equation}
  \label{eq:castillo2008} 
\frac{  \Pi_n (\Gamma \in \fp (\gamma^*_n), f \in  E_n)}
  {\Pi_n (f \in B_n(f_n^*,\rateTwo_n) )}
  \le 
  {e^{ - 2 n\rateTwo _n^2}},
\end{equation}
which, according to Lemma 1 in \cite{ghosal2007convergence}, along
with \eqref{eq:kl}, guarantees that the posterior probability of the
set in the numerator vanishes in probability.

The message of Proposition \ref{proposition:FP-prior-prob} is that
the posterior contraction rates of false positive models are
significantly slower than minimax rates.
With Theorem \ref{thm:overall-contraction}, it further
implies false positive models eventually receive negligible
posterior mass. 
The use of posterior contraction rates adaptation
echoes \cite{ghosal2008nonparametric}
who establishes model selection consistency among density models with
different regularity levels.

The key inequalities in \eqref{eq:l2} and \eqref{eq:castillo2008} require establishing lower and upper bounds on the prior concentration in $L_2(Q_n)$ balls around $f^*_n$. Such concentrations are completely governed by the {\it $f^*_n$-shifted small ball probabilities} of the underlying sparse Gaussian processes at appropriate rescaling levels. The lower bound calculations are similar to those in \cite{van2009adaptive} and require working with rescaling levels that appropriately grow to infinity as determined by the true smoothness level $\beta$. But, as mentioned earlier, the upper bound calculations are much more technically involved and require dealing with all rescaling levels. Furthermore, unlike \cite{van2009adaptive} we calculate small ball probabilities by viewing the Gaussian process as a random element in $L_2(Q_n)$, necessitating new characterization of the associated reproducing kernel Hilbert space. These details are presented in the next section.

\relax

\section{Small Ball Probability of Rescaled GP}   
\label{sec:sbp}

\subsection{Series representation of $W^{a,\gamma}$}
\label{sec:KL-expansion}
Let ${W^{a,\gamma}} \sim \segp (\R^{d_n};\gamma, \delta_ a)$ where
$\gamma \in \{0,1\}^{d_n}$, 
$|\gamma|>0$, and $a>0$ is
a fixed rescaling level, i.e., the rescaling measure is the Dirac delta measure at level $a$. 
In this section, we obtain a series representation of $W^{a,\gamma}$ via the 
Karhunen-Lo$\grave{e}$ve expansion of the covariance kernel of $W^{a,\gamma}$.

The covariance kernel of $W^{a,\gamma}$ is
\begin{equation}
  \label{eq:SE-kernel}
    {K_{a,\gamma} }\left( {s,t} \right)
    = {e^{ - {a^2}\|s_{[\gamma]} - t_{[\gamma]}\|_2^2}}
    = \prod\nolimits_{\{i: \gamma_i=1\} }^{}
        {{e^{ - {a^2}{{\left( {{s_i} - {t_i}} \right)}^2}}}}.  
\end{equation} 
To obtain an eigen-expansion of $K_{a,\gamma}(\cdot,\cdot)$, 
first recall the eigen-expansion of
univariate Squared Exponential kernel function under Gaussian design. 
For $s,t \in \R$,  
${K_{a,1}}\left( {s,t} \right)
  = {e^{ - {a^2}{{\left( {s - t} \right)}^2}}}
  = \sum\nolimits_{j = 0}^\infty
  {{\lambda _j}
    {\varphi _j}\left( s \right)
    \overline {{\varphi _j}\left( t \right)} }$, 
where 
eigenvalues 
${\lambda _j} = \sqrt {{{2{v_1}}}/{V}} {B^j}$,  
 eigenfunctions 
${\varphi _j}\left( x \right)
= {e^{ - \left( {{v_3} - {v_1}} \right){x^2}}}
{H_j}\left( {\sqrt {2{v_3}} x} \right)$ with 
physicists' Hermite polynomial 
${H_j}\left( x \right)
= {\left( { - 1} \right)^j}
{e^{{x^2}}}
\frac{{{d^j}}} {{d{x^j}}}{e^{ - {x^2}}}$,  and
the constants in the eigen-expansion are defined as follows, 
\begin{equation}
  \label{eq:eigen-constants}
  v_1^{ - 1} = 4{\xi ^2},
  {v_2} = {a^2},
  {v_3} = \sqrt {v_1^2 + 2{v_1}{v_2}},
  V = {v_1} + {v_2} + {v_3},
  B = {v_2}/V. 
\end{equation}
(See Chapter 4.3 of \cite{rasmussen2006gaussian} for more details). 
The eigenfunctions $\{\varphi_j\}$ form an orthonormal basis
under the inner product
\[{\left\langle {{\varphi _j},{\varphi _k}} \right\rangle _G}
  = \int_\R^{} {{\varphi _j}\left( s \right)
    {\varphi _k}\left( s \right)
    g\left( s \right)ds}
  = {\delta _0}\left( {j - k} \right), \]
where $G $ denotes the univariate Normal distribution $N(0,\xi^2)$
and $g$ is its density function relative to Lebesgue measure.  
The Gaussian measure $G$ corresponds to Assumption \ref{assumption:q}. 

With the univariate expansion, 
$K_{a,\gamma}$ is a tensor product of univariate SE 
kernels and admits the following expansion: 
for $s,t \in \R^{d_n}$, 
\begin{eqnarray}
  \label{eq:eigenexpansion}
    {K_{a,\gamma} }\left( {s,t} \right)
    &=&
        \prod\nolimits_{\{i: \gamma_i=1\} }^{}
        {\left( {\sum\nolimits_{j = 0}^\infty
        {\lambda _j^{\left( i \right)}
        \varphi _j^{\left( i \right)}\left( {{s_i}} \right)
        \overline {\varphi _j^{\left( i \right)}
        \left( {{t_i}} \right)} } } \right)}
        \nonumber \\
    & \equiv &
        \sum\nolimits_{k = 0}^\infty
        {{\mu _k^{\left( \gamma  \right)}}
        \psi _k^{\left( \gamma  \right)}\left( s \right)
        \overline {\psi _k^{\left( \gamma  \right)}\left( t \right)} }, 
  \end{eqnarray}
where $\lambda _j^{(i)}$ is the $j^{th}$ eigenvalue
of $i^{th}$ univariate SE kernel,
$\varphi _j^{(i)}$ is the $j^{th}$ eigenfunction 
of $i^{th}$ univariate SE kernel, 
eigenfunctions $\left\{ {\psi _k^{\left( \gamma  \right)}} \right\} $ and
eigenvalues $\left\{ {\mu _k^{\left( \gamma  \right)}} \right\}$
are ordered by collecting lower order terms first.

The eigenvalue
$\mu_k^{(\gamma)} = (2v_1/V)^{|\gamma|/2} B^m$
for some $m \in \N$ and $m$ is weakly increasing in $k$.
Note the number of $k-$tuples of positive integers whose sum is $m$
is $\binom{m-1}{k-1}$, 
the number of terms involving $B^m$ for $m\ge 1$ is
$\sum\nolimits_{k = 1}^m \binom{m-1}{k-1} \binom{|\gamma|}{k}
=\binom{|\gamma|+m-1}{m}$
by Vandermonde's identity.
By ratio test, the eigenvalues are summable:
$\sum \nolimits_{m=1}^\infty \binom{|\gamma|+m-1}{m} B^m < \infty$. 
As we collect low order terms first, first few terms are 
$\mu_0^{(\gamma)} = (2v_1/V)^{|\gamma|/2}$,
$\mu_k^{(\gamma)} = (2v_1/V)^{|\gamma|/2} B$ for $k=1:|\gamma|$,
$\mu_k^{(\gamma)} = (2v_1/V)^{|\gamma|/2} B^2$ for
$k=|\gamma|+1 : |\gamma|+\binom{|\gamma|+1}{2}$.
(Recall the constants in (\ref{eq:eigen-constants}).)

Eigenfunctions are defined accordingly
depending on eigenvalues.
The ordering of the eigenfunctions with the same eigenvalues 
does not matter. 
The eigenfunctions are orthogonal if the base measure 
is $Q_n \equiv N(0,\xi^2I_{d_n})$ where isotropy is assumed without loss of generality. %

With the eigen-expansion of $K_{a,\gamma}(\cdot, \cdot)$ and the summable
eigenvalues,  
$W^{a, \gamma}$ has the series representation: 
for $Z_j \distras{iid} N(0,1)$, $j=0,1,...,$
\begin{equation}
  \label{eq:series-rep}
W^{a, \gamma}_t = \sum\nolimits_{j=0}^\infty
Z_j \sqrt{\mu^{(\gamma)}_j}\psi _j^{(\gamma)}(t).   
\end{equation}
By orthonormality of $\{\psi_j^{(\gamma)}\}$,
$||W^{a,\gamma}||^2_{L_2(Q_n)} = \sum\nolimits_{j=0}^\infty Z_j^2{\mu^{(\gamma)}_j}$.
Then,   
$W^{a,\gamma} \in {L_2(Q_n)}$, $a.s.$.

\subsection{Concentration Function}
\label{sec:gauss-rand-elem}

With the series representation (\ref{eq:series-rep}),
we can treat the Gaussian process $W^{a,\gamma}$
as a Borel measurable map
into the Banach space $( L_2(Q_n), ||\cdot||_{L_2(Q_n)}) $ such that
the random variable $b^*(W^{a,\gamma}) $ is Gaussian
for every continuous, linear map $b^* : L_2(Q_n) \to \R$. 
 Since $W^{a,\gamma}$ is continuous with probability 1,
 the sample paths under consideration are continuous versions. 
 For any set $U \subset L_2(Q_n)$, 
 the probability 
 $\Pr(W^{a,\gamma} \in U)$ equals the probability 
 $\Pr \left( W^{a,\gamma} \in U \cap C(\R^{d_n})\right)$. 
 In the end, the Banach space where $W^{a,\gamma}$ lives is essentially $\cL_2(Q_n)\equiv L_2(Q_n)\cap C(\R^{d_n}) $.  
 
The RKHS $\cH^{a,\gamma}$
associated with $W^{a,\gamma}$ 
is the completion of the range
$S [\cL_2(Q_n)^*] $ 
where $S[{b^*}]
= \bE [ {{W^{a,\gamma }} b^*( {{W^{a,\gamma }}} )} ]$, 
$b^* \in \cL_2(Q_n)^*$ in the sense of Pettis integral; see \cite{van2008reproducing} for more details. 
The corresponding RKHS norm is determined by the inner product 
${\left\langle {Sb_1^*,Sb_2^*} \right\rangle _{{\cH^{a,\gamma }}}}
= \bE \left( {{b_1^*}\left( {{W^{a,\gamma }}} \right)
    b_2^*\left( {{W^{a,\gamma }}} \right)} \right)$. 
Using the same argument as (2.4) of \cite{van2008reproducing}, 
the RKHS norm is stronger than the $L_2(Q_n)$ norm.  
Hence, $\cH^{a,\gamma}$ is seen as a dense subset of $\cL_2(Q_n)$ and can be isometrically identified with
an $\ell_2$ sequence space. 
In our case, with the eigen-expansion (\ref{eq:eigenexpansion}),  
the RKHS unit ball $\cH_1^{a,\gamma}$ of $W^{a,\gamma}$ 
can be isometrically identified with the following ellipsoid: 
\begin{equation}
  \label{eq:RKHS}
   \left\{ {\left\{ {{\theta _j}} \right\}_{j = 1}^\infty :
      \sum\nolimits_{j = 1}^\infty  {\theta _j^2/{\mu^{\left( \gamma  \right)} _j}}  \le 1} \right\}
  \subseteq {\ell ^2}\left( \bN \right).
\end{equation} 
This isometry provides a different route to compute the metric entropy
of the unit ball of $\cH^{a,\gamma}$.

Prior mass calculations in Theorem \ref{thm:overall-contraction} and
Proposition \ref{proposition:FP-prior-prob} require both upper and lower 
bounds for $\Pr(\|W^{a,\gamma} - f_n^*\|_{L_2(Q_n)} < \rateTwo_n)$, 
the $f^*_n$-shifted $\rateTwo_n$-ball probability of ${W^{a,\gamma}}$.


With the above abstract formulation,
the small ball probability, in log scale, can be bounded as
\begin{equation}
  \label{eq:concentration-inequality}
  {\phi^{a,\gamma} _{f_n^*}}({{\varepsilon _n}} )
  \le
  - \log \Pr ( {\|W^{a,\gamma} - {f_n^*}{\|_{L_2(Q_n)}}
      < {\varepsilon _n}})
  \le
  {\phi^{a,\gamma} _{f_n^*}}( {{\varepsilon _n}/2} ),
\end{equation}
with $\phi^{a,\gamma}_{f^*_n}$ denoting the concentration function
\begin{equation}
  \label{eq:concentration-fcn}
  \phi _{{f_n^*}}^{a,\gamma }(\rateTwo_n)
  = \mathop {\inf }\limits_{h \in {\cH^{a,\gamma }}:
    \|h - f_n^*\|_{L_2(Q_n)} < \rateTwo_n}
  \|h\|^2_{\cH^{a,\gamma}}
  - \log \Pr (\|W^{a,\gamma}\|_{L_2(Q_n)} < \rateTwo_n), 
\end{equation}
where $\|\cdot\|_{\cH^{a,\gamma}}$ is the canonical norm of the Hilbert space $\cH^{a,\gamma}$. 

With inequality (\ref{eq:concentration-inequality}),
bounding shifted small ball probability is essentially bounding
the concentration function. 
The concentration function has two parts: 
the decentering part and the centered small ball probability exponent.
The decentering part measures the position of the centering $f_n^*$
relative to the RKHS.

\subsection{Centered Small Ball Probability Bounds via Metric Entropy}
\label{sec:entr-calc-small}

The centered small ball probability 
is bounded using the metric entropy method
\citep{kuelbs1993metric,li1999approximation, van2008reproducing}.
The metric entropy method links the metric entropy of the RKHS unit ball 
with the centered small ball probability. 
Section 6 of the review paper \cite{van2008reproducing} summarizes the quantitative relationship between bounds of metric entropy and bounds of small ball probability. 
In our analysis, we first calculate the metric entropy of the RKHS unit ball and
then use the relationship to derive bounds of 
centered small ball probabilities as a corollary.

With isometry, the metric entropy of the RKHS unit ball is
the metric entropy of the $\ell_2$ ellipsoid (\ref{eq:RKHS}).
It is well known that the metric entropy of $\ell_2$ ellipsoid
in the fashion of (\ref{eq:RKHS}) depends on the decay rate of $\{\mu_j^{(\gamma)}\}$.
Lemma \ref{lemma:metric-entropy} gives bounds for the metric entropy of the RKHS unit ball.

\begin{lemma}
  \label{lemma:metric-entropy} 
  Suppose $\cH_1^{a,\gamma}$ is the RKHS unit ball
  associated to the GP $\segp (\R^{d_n};\gamma, \delta_ a)$ with 
  the design measure $Q_n \equiv N(0,\xi^2I_{d_n} )$, 
  constants $v_1$ and $V$ are defined in (\ref{eq:eigen-constants}), 
 constants $a$,  ${\varepsilon}$ and $|\gamma|$ 
  satisfy $\varepsilon^{-2}  \ge C_{H}{\left( {a \xi } \right)^{|\gamma|}}$ 
  for some constant $C_{H}$ such that
  ${\log \left( {\frac{1}{\varepsilon} } \right)
    - \frac{|\gamma|}{4}
    \log \left( {\frac{V}{{2{v_1}}}} \right)}
  \asymp \log \left( {\frac{1}{\varepsilon} } \right) $,  
  and ${a \xi } \log \left( {1/\varepsilon } \right) > |\gamma|$,
  then the metric entropy of $\cH^{a,\gamma}_1$ satisfies
  \[\begin{array}{l}
      \log N\left( {\cH_1^{a,\gamma},\varepsilon ,\| \cdot |{|_{L_2(Q_n)}}} \right)
      \precsim
      a ^{|\gamma|}
      \log {\left( {1/\varepsilon } \right)^{|\gamma| + 1}}/|\gamma|!,  
      \\ 
      \log N\left( {\cH_1^{a,\gamma},\varepsilon ,\| \cdot |{|_{L_2(Q_n)}}} \right)
      \succsim
      a ^{|\gamma|}
      \log {\left( {1/\varepsilon } \right)^{|\gamma|}}/|\gamma|!. 
    \end{array}\]
\end{lemma}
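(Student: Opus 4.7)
The plan is to convert the question about the RKHS unit ball into a metric entropy computation for the explicit $\ell_2$ ellipsoid $E = \{\theta \in \ell^2(\bN) : \sum_j \theta_j^2/\mu_j^{(\gamma)} \le 1\}$ supplied by the isometry in \eqref{eq:RKHS}, and exploit the tensor-product eigenstructure already recorded just before the lemma: the eigenvalues take the form $\mu_k^{(\gamma)} = (2v_1/V)^{|\gamma|/2} B^m$ with multiplicity $\binom{|\gamma|+m-1}{m}$ at level $B^m$, so by the hockey-stick identity the cumulative count up to level $B^M$ is $\binom{|\gamma|+M}{M}$.

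The first substantive step is to identify the effective dimension at resolution $\varepsilon$. Setting $m_\varepsilon := \max\{m : \mu_m^{(\gamma)} \ge \varepsilon^2\}$, I would expand $B = v_2/V$ using $v_2 = a^2$, $v_1^{-1} = 4\xi^2$, and $v_3 = \sqrt{v_1^2 + 2 v_1 v_2}$ to obtain $\log(1/B) \asymp 1/(a\xi)$ for $a\xi$ large. Combined with the hypothesis $\log(1/\varepsilon) - (|\gamma|/4)\log(V/(2v_1)) \asymp \log(1/\varepsilon)$, solving $(|\gamma|/4)\log(V/(2v_1)) + (m/2)\log(1/B) \asymp \log(1/\varepsilon)$ gives $m_\varepsilon \asymp a\xi \log(1/\varepsilon)$. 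The hypothesis $a\xi \log(1/\varepsilon) > |\gamma|$ then forces $m_\varepsilon \succsim |\gamma|$, so Stirling produces $N_\varepsilon := \binom{|\gamma|+m_\varepsilon}{m_\varepsilon} \asymp m_\varepsilon^{|\gamma|}/|\gamma|! \asymp a^{|\gamma|} \log(1/\varepsilon)^{|\gamma|}/|\gamma|!$.

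For the upper bound I would project $E$ onto its first $N_\varepsilon$ coordinates; the tail is controlled by $\sum_{j > N_\varepsilon} \theta_j^2 \le \max_{j > N_\varepsilon} \mu_j^{(\gamma)} \cdot \sum_j \theta_j^2/\mu_j^{(\gamma)} \le \varepsilon^2$, so the truncation error is at most $\varepsilon$. The truncated ellipsoid sits inside the box $\prod_{j=1}^{N_\varepsilon}[-\sqrt{\mu_j^{(\gamma)}}, \sqrt{\mu_j^{(\gamma)}}]$, which I cover coordinate-wise at mesh $\varepsilon/\sqrt{N_\varepsilon}$, yielding at most $\prod_{j=1}^{N_\varepsilon}\bigl(1 + 2\sqrt{N_\varepsilon \mu_j^{(\gamma)}}/\varepsilon\bigr)$ centres. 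Using $\mu_j^{(\gamma)} \le 1$ and $\log N_\varepsilon \precsim \log(1/\varepsilon)$, taking logs delivers $\log N \precsim N_\varepsilon \log(1/\varepsilon) \asymp a^{|\gamma|}\log(1/\varepsilon)^{|\gamma|+1}/|\gamma|!$. For the lower bound I would use a Gilbert--Varshamov sign-vector packing: for each $\sigma \in \{\pm 1\}^{N_\varepsilon}$ set $\theta^\sigma_j = \sigma_j \sqrt{\mu_j^{(\gamma)}/N_\varepsilon}$ for $j \le N_\varepsilon$ and $0$ otherwise, so $\sum_j (\theta^\sigma_j)^2/\mu_j^{(\gamma)} = 1$ and each $\theta^\sigma \in E$. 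If $\sigma,\sigma'$ differ in positions $I$, then $\|\theta^\sigma - \theta^{\sigma'}\|_{\ell^2}^2 = 4\sum_{j \in I}\mu_j^{(\gamma)}/N_\varepsilon \ge 4|I|\varepsilon^2/N_\varepsilon$, which is $\ge \varepsilon^2$ as soon as $|I| \ge N_\varepsilon/4$. Gilbert--Varshamov supplies a set of $\exp(c N_\varepsilon)$ sign vectors pairwise at Hamming distance $\ge N_\varepsilon/4$, so $\log N \succsim N_\varepsilon \asymp a^{|\gamma|}\log(1/\varepsilon)^{|\gamma|}/|\gamma|!$.

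The main obstacle is the threshold analysis in the second step: the eigenvalue formula depends on $a$, $\xi$, and $|\gamma|$ through the coupled constants $v_1, v_2, v_3, V, B$, and getting $m_\varepsilon \asymp a\xi \log(1/\varepsilon)$ requires expanding $v_3 = \sqrt{v_1^2 + 2 v_1 v_2}$ carefully for $a\xi$ large and simultaneously using the two interlocking hypotheses on $\log(V/(2v_1))$ and $a\xi\log(1/\varepsilon)$ so that all $\xi$- and $|\gamma|$-dependence is absorbed into the implicit constants of $\asymp$. Once this calibration is in place, the ellipsoid covering and the Gilbert--Varshamov packing in the third step are routine.
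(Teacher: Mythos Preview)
Your strategy coincides with the paper's: pass to the $\ell_2$ ellipsoid via the isometry, identify the effective dimension $N_\varepsilon=\binom{|\gamma|+m_\varepsilon}{|\gamma|}$ by solving $(2v_1/V)^{|\gamma|/2}B^{m}=\varepsilon^2$ for $m$ and using $\log(1/B)\asymp (a\xi)^{-1}$, then bound the finite-dimensional truncation from above and below. The paper obtains the same $m_\varepsilon\asymp a\xi\log(1/\varepsilon)$ and the same $N_\varepsilon\asymp a^{|\gamma|}\log(1/\varepsilon)^{|\gamma|}/|\gamma|!$.

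The differences are in the finite-dimensional step. For the upper bound the paper uses a volume comparison, noting that $\tilde{\cH}_\varepsilon+B_2^{\tau+1}(\varepsilon)\subset 2\tilde{\cH}_\varepsilon$ and dividing volumes; this directly produces $(\tau+1)\log(1/\varepsilon)+\tfrac12\sum_j\log\mu_j$, and since $\sum_j\log\mu_j\le 0$ the extra factor of $\log(1/\varepsilon)$ falls out immediately. Your box cover at mesh $\varepsilon/\sqrt{N_\varepsilon}$ instead yields $N_\varepsilon\bigl(\log(1/\varepsilon)+\tfrac12\log N_\varepsilon\bigr)$, so you need the auxiliary claim $\log N_\varepsilon\precsim\log(1/\varepsilon)$. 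That claim is not entirely free from the stated hypotheses: using $|\gamma|\log(a\xi)\precsim\log(1/\varepsilon)$ and $|\gamma|<a\xi\log(1/\varepsilon)$ one can verify it when $a\xi$ is either bounded or at least a power of $\log(1/\varepsilon)$, but there is an intermediate regime where the bound $|\gamma|\log\log(1/\varepsilon)\precsim\log(1/\varepsilon)$ is not immediate. The volume argument sidesteps this, and you could too by keeping the term $\tfrac12\sum_j\log\mu_j$ rather than discarding it via $\mu_j\le 1$. For the lower bound the paper simply uses $B_2^{\tau+1}(\varepsilon)\subset\tilde{\cH}_\varepsilon$ and the trivial covering bound $\log N(B_2^{\tau+1}(\varepsilon),\varepsilon/2,\|\cdot\|_2)\ge(\tau+1)\log 2$; your Gilbert--Varshamov packing is a valid alternative that lands on the same order.
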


The proof of Lemma \ref{lemma:metric-entropy} follows standard technique of metric entropy of $\ell_2$ sequence spaces. 
The lower bound is smaller than the upper bound by a logarithmic factor.
This gap affects the bounds for small ball probabilities,
but it does not jeopardize 
Theorem \ref{thm:overall-contraction} and Proposition \ref{proposition:FP-prior-prob}. 

Then the centered small ball probability bounds are obtained in Lemma \ref{lemma:centered-sbp} as a corollary. 
\begin{lemma}
  \label{lemma:centered-sbp}
  Suppose $\phi _0^{a,\gamma }$ is the concentration function
  associated to the GP $\segp (\R^{d_n};\gamma, \delta_ a)$ with 
  the design measure $Q_n \equiv N(0,\xi^2I_{d_n} )$, 
  constants $a$ and  ${\varepsilon}$ 
  satisfy $\varepsilon^{-2}  \ge C_{H}{\left( {a \xi } \right)^{|\gamma|}}$ and
  ${a \xi } \log \left( {1/\varepsilon } \right) > |\gamma|$, 
  then there exists a constant $C$ independent of $a,\xi$ and $|\gamma|$ 
  such that  
  \[\phi _0^{a,\gamma }\left( {{\varepsilon}} \right)
    \le
    C a ^{|\gamma|}
    \log {\left( {a/\varepsilon } \right)^{|\gamma| + 1}}/|\gamma|!, 
  \]
  and there exists a constant $C^\prime$ such that
  \[
    \phi _0^{a,\gamma }\left( {{\varepsilon }} \right)
    \ge
    C^\prime {a^{|\gamma|}}
    \log {\left( {1/\varepsilon } \right)^{|\gamma|}}/|\gamma|!. 
  \]

\end{lemma}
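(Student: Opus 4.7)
The plan is to deduce Lemma~\ref{lemma:centered-sbp} directly from Lemma~\ref{lemma:metric-entropy} via the abstract entropy-to-small-ball correspondence advertised at the end of Section~\ref{sec:entr-calc-small}. The Kuelbs--Li theorem and its refinements (see Section~6 of \citep{van2008reproducing}) furnish two-sided bounds relating the concentration function $\phi_0^{a,\gamma}(\varepsilon)$ to $\log N(\cH_1^{a,\gamma}, c\varepsilon, \|\cdot\|_{L_2(Q_n)})$; these become sharp, up to logarithmic gaps, precisely in the logarithmic metric entropy regime that Lemma~\ref{lemma:metric-entropy} places us in. I will apply the correspondences only at radii $\varepsilon$ satisfying the hypotheses of Lemma~\ref{lemma:metric-entropy} and then substitute the entropy estimates of that lemma.

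For the lower bound, I will use the classical inequality $\phi_0^{a,\gamma}(\varepsilon) \ge \log N(\cH_1^{a,\gamma}, 2\varepsilon, \|\cdot\|_{L_2(Q_n)})$, a direct consequence of Borell's Gaussian isoperimetric inequality valid for any centered Gaussian Borel element. Combining this with the lower entropy bound of Lemma~\ref{lemma:metric-entropy} at radius $2\varepsilon$ produces $\phi_0^{a,\gamma}(\varepsilon) \succsim a^{|\gamma|}(\log(1/(2\varepsilon)))^{|\gamma|}/|\gamma|!$. The spurious factor $\log 2$ inside the logarithm is absorbed using the standing hypothesis $a\xi \log(1/\varepsilon) > |\gamma|$, which ensures that $\log(1/\varepsilon)$ dominates any additive constant, yielding the stated bound with a universal constant $C^\prime$ independent of $a$, $\xi$, and $|\gamma|$.

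For the upper bound, I will invoke the reverse side of the metric entropy method: when $\log N(\cH_1^{a,\gamma}, \cdot, \|\cdot\|_{L_2(Q_n)})$ grows only polynomially in $\log(1/\cdot)$, one has $\phi_0^{a,\gamma}(\varepsilon) \precsim \log N(\cH_1^{a,\gamma}, c\varepsilon, \|\cdot\|_{L_2(Q_n)})$ up to universal constants (the arguments in Section~6 of \citep{van2008reproducing}). Plugging in the upper entropy bound of Lemma~\ref{lemma:metric-entropy} furnishes the desired estimate. The appearance of $\log(a/\varepsilon)$ in place of $\log(1/\varepsilon)$ reflects the requirement $\varepsilon^{-2} \ge C_H(a\xi)^{|\gamma|}$ under which that entropy bound is valid; writing $\log(a/\varepsilon) = \log a + \log(1/\varepsilon)$ provides a convenient uniform majorant in this regime.

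The chief obstacle is not conceptual but quantitative: the Kuelbs--Li/Li--Shao machinery in the literature is typically stated asymptotically for a single fixed Gaussian process and delivers process-dependent constants. Here, by contrast, I need the two-sided entropy-small-ball correspondence with constants independent of $a$, $\xi$, and $|\gamma|$. The cleanest route will be to rederive the required nonasymptotic inequalities directly, using the explicit eigen-expansion \eqref{eq:series-rep} of $W^{a,\gamma}$ and the ellipsoidal representation \eqref{eq:RKHS} of $\cH_1^{a,\gamma}$, and to track each constant carefully through the entropy estimates of Lemma~\ref{lemma:metric-entropy}. The single logarithmic gap between those entropy estimates then propagates unchanged into the small-ball bounds and explains the difference between the $(\log(a/\varepsilon))^{|\gamma|+1}$ upper bound and the $(\log(1/\varepsilon))^{|\gamma|}$ lower bound.
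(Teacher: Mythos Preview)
Your proposal is correct and takes essentially the same approach as the paper: both bounds are derived from Lemma~\ref{lemma:metric-entropy} via the entropy--small-ball correspondence, with the paper citing Lemma~4.6 of \cite{van2009adaptive} for the upper bound and the first inequality of Lemma~2.1 in \cite{aurzada2009small} (with $\lambda=2$, giving $\phi_0^{a,\gamma}(\varepsilon) \ge \log N(\cH_1^{a,\gamma},\varepsilon,\|\cdot\|_{L_2(Q_n)}) - 2$) for the lower bound. The only minor quibble is that the lower-bound inequality you invoke is more precisely a consequence of the Cameron--Martin shift formula and a packing/disjointness argument than of Borell's isoperimetric inequality proper, which is exactly how Aurzada's lemma is proved.
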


\begin{proof}
  With the assumptions, the metric entropy calculation in Lemma \ref{lemma:metric-entropy} holds.
  Following the same idea as in Lemma 4.6 of \cite{van2009adaptive},
  the first assertion holds.
  
  Using the first inequality of Lemma 2.1 in \cite{aurzada2009small},
  let $\lambda=2$ in the Lemma, 
  $\phi _0^{a,\gamma }\left( {{\varepsilon _n}} \right)
  \ge       
  \log N\left( {\cH^{a,\gamma},\varepsilon_n ,\| \cdot \|_{L_2(Q_n)}} \right)-2
  \ge 
  C^\prime {a^{|\gamma|}}
  \log {\left( {1/\varepsilon } \right)^{|\gamma|}}/|\gamma|!$ 
  holds for some constant $C^\prime$.
  
\end{proof}

\subsection{Shifted Small Ball Probability Estimates} 
\label{sec:decentering}
With centered small ball probability calculation, 
bounds for shifted small ball probability are readily available if
the bounds of the decentering part are obtained. 
Lemma \ref{lemma:decentering-upper-bd} and Lemma \ref{lemma:decentering} 
give upper and lower bounds of the decentering part
as a function of rescaling level and model index parameter.  
Lemma \ref{lemma:decentering-upper-bd} is used to \textit{lower} bound the prior mass on
$ \{ \|W^{a,\gamma_n^*} - {f_n^*}\|_{L_2(Q_n)} < \varepsilon_n  \}$; 
 Lemma \ref{lemma:decentering}  is used to \textit{upper} bound the prior mass on
 $\{\Gamma \in \fp (\gamma_n^*)\} \cap 
 \{ \|W^{a,\Gamma} - {f_n^*}\|_{L_2(Q_n)} < \varepsilon_n  \}$.

\begin{lemma} 
  \label{lemma:decentering-upper-bd}
  Suppose
  $\cH^{a,\gamma_0}$ is the RKHS 
  of the GP $\segp (\R^{d_n};\gamma_n^*, \delta_ a)$ with 
  the design measure $Q_n \equiv N(0,\xi^2I_{d_n} )$,
  $\gamma_n^*\in \{0,1\}^{d_n}$ encodes the indices of true variables, 
  $d_0 \equiv |\gamma_n^*|$, 
  $f_0 \in H^{\beta}(\R^{d_0}) \cap {L_2(Q_0)} $,
  $f_n^* = T_{\gamma_n^*} f_0 $,
   then for every $a>0$, 
  there exists a constant $C$, $\varepsilon_0$
  such that for all
  $\varepsilon < \varepsilon_0$
    \[\mathop {\inf }
      \limits_{h \in {\cH^{a,\gamma_0 }}:
        \|h - {{f}_n^*}\|_{L_2(Q_n)} < {\varepsilon }}
    \|h\|^2_{{\cH^{a,\gamma_0 }}}
    \le
    C     {\left( {2\sqrt \pi  } \right)^{d_0}} 
    {a^{d_0}} 
    {e^{C{\varepsilon }^{ - 2/\beta }/{a^2}}}. 
  \]
\end{lemma}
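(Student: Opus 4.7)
My plan is to pass to the intrinsic (Fourier) characterization of the RKHS of a rescaled squared-exponential kernel, construct an explicit candidate $h$ by Fourier-truncating $f_0$, and then verify the two parts of the claimed bound directly.

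I would first reduce the $d_n$-dimensional problem to a $d_0$-dimensional one. Because the kernel $K_{a, \gamma_n^*}(s, t) = e^{-a^2 \|s_{\gamma_n^*} - t_{\gamma_n^*}\|_2^2}$ depends on $s$ and $t$ only through their $\gamma_n^*$-coordinates, any Mercer eigenfunction of the integral operator under $Q_n = G_{d_n}$ with nonzero eigenvalue must itself be constant in the complementary coordinates, and restricting to $x_{\gamma_n^*}$ yields a Mercer eigenfunction of the $d_0$-dimensional integral operator under $G_{d_0}$ with the same eigenvalue; the converse is equally direct via $T_{\gamma_n^*}$. Consequently, writing $\cH^{a}_{d_0}$ for the RKHS of the scaled SE kernel on $\R^{d_0}$, we have $\cH^{a, \gamma_n^*} = \{T_{\gamma_n^*} \tilde h : \tilde h \in \cH^{a}_{d_0}\}$ with matching RKHS norms, and Fubini together with $f_n^* = T_{\gamma_n^*} f_0$ gives $\|T_{\gamma_n^*} \tilde h - f_n^*\|_{L_2(Q_n)} = \|\tilde h - f_0\|_{L_2(G_{d_0})}$. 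The problem reduces to bounding $\inf\{\|\tilde h\|^2_{\cH^a_{d_0}} : \|\tilde h - f_0\|_{L_2(G_{d_0})} < \varepsilon\}$.

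Next, I would invoke the Fourier characterization of $\cH^{a}_{d_0}$. The Fourier transform of $K_{a, d_0}(0, \cdot)$ is $(\sqrt \pi /a)^{d_0} e^{-\|\lambda\|_2^2/(4 a^2)}$, leading to the identity
\[
\|\tilde h\|^2_{\cH^a_{d_0}} \;=\; c\, a^{d_0} \int_{\R^{d_0}} |\hat{\tilde h}(\lambda)|^2 \, e^{\|\lambda\|_2^2 / (4 a^2)} \, d\lambda
\]
for a convention-dependent constant $c = c(d_0) > 0$ and for every $\tilde h$ for which the integral is finite. I take $\tilde h$ to be the inverse Fourier transform of $\hat{f_0} \, \mathbf 1_{\|\lambda\|_2 \le \rho}$ for a cutoff $\rho > 0$ to be chosen below; $\hat{f_0}$ is conjugate-symmetric and the truncation preserves this, so $\tilde h$ is real. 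The two resulting estimates are standard: since $\beta > d_0/2$ forces $f_0 \in L_2(\R^{d_0})$, Plancherel gives
\[
\|f_0 - \tilde h\|^2_{L_2(\R^{d_0})} \;=\; \int_{\|\lambda\|_2 > \rho} |\hat{f_0}(\lambda)|^2 \, d\lambda \;\le\; (1 + \rho^2)^{-\beta} \|f_0\|^2_{H^\beta(\R^{d_0})},
\]
which translates into an $L_2(G_{d_0})$ bound after multiplying by the fixed constant $\|g_{d_0}\|_\infty$; meanwhile
\[
\|\tilde h\|^2_{\cH^a_{d_0}} \;\le\; c\, a^{d_0} e^{\rho^2/(4a^2)} \int_{\|\lambda\|_2 \le \rho} |\hat{f_0}(\lambda)|^2 \, d\lambda \;\le\; c\, a^{d_0} e^{\rho^2/(4a^2)} \|f_0\|^2_{L_2(\R^{d_0})}.
\]
Choosing $\rho \asymp \varepsilon^{-1/\beta}$ so that $\|f_0 - \tilde h\|_{L_2(G_{d_0})} < \varepsilon$ for all sufficiently small $\varepsilon$, and absorbing the fixed dimension- and smoothness-dependent prefactors into a single constant $C$, yields the advertised bound $C(2\sqrt \pi)^{d_0} a^{d_0} \exp\{C\varepsilon^{-2/\beta}/a^2\}$.

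The main obstacle, then, is not any individual computation but the rigorous identification of the two RKHS representations in play: the Mercer RKHS based on $L_2(G_{d_0})$-eigenfunctions appearing as the $\ell^2$ ellipsoid in \eqref{eq:RKHS}, and the intrinsic kernel RKHS characterized by the Fourier-integral formula above. Both are completions of $\mathrm{span}\{K_a(x, \cdot)\}$ under the reproducing inner product determined by $K_a$, so they coincide as Hilbert spaces, but this identification must be invoked explicitly in order to transport the Fourier-based norm estimate back to the ellipsoid-based norm used in the subsequent concentration-function calculations. A secondary and easier point is that the bound is required only for each fixed $a > 0$, so the threshold $\varepsilon_0 = \varepsilon_0(a)$ may depend on $a$: one just needs $\rho(\varepsilon)$ large enough that the $(1 + \rho^2)^{-\beta}$ tail estimate and the truncation regime are valid, which is automatic for small $\varepsilon$.
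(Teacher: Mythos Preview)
Your proposal is correct and follows essentially the same route as the paper: both construct the approximant $h$ as a low-frequency Fourier truncation of $f_0$ and evaluate its RKHS norm via the spectral representation $\|h_\psi\|^2_{\cH^{a,\gamma}} = \int |\hat h_\psi(\lambda)|^2 m_{a,\gamma}(\lambda)^{-1}\,d\lambda$, which is exactly your Fourier-integral formula up to constants. The only cosmetic difference is that the paper bounds the approximation error pointwise (via a H\"older/Cauchy--Schwarz argument) before integrating against $g_{d_n}$, whereas you go directly through Plancherel in $L_2(\R^{d_0})$ and then pass to $L_2(G_{d_0})$; your identification of the Mercer and intrinsic RKHSs is handled in the paper by citing the representation lemmas of van der Vaart and van Zanten.
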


The proof of
Lemma \ref{lemma:decentering-upper-bd} 
leverages the representation theorem of the RKHS
and uses the squared RKHS norm of a special element
in the true model neighborhood as an upper bound. 
In light of Lemma 4.1 of \cite{van2009adaptive} and
Lemma 7.1 of \cite{van2008reproducing},
after embedded into $L_2(Q_n)$, 
elements in $\cH^{a,\gamma}$ admit the representation:
for $t \in \R^{d_n}$, 
${h_\psi }\left( t \right)
  =
  \int_{{\R^{|\gamma|} }}
  {{e^{i\left( {\lambda ,t_0} \right)}}
    \psi \left( \lambda  \right)
    {m_{a,\gamma }}\left( \lambda  \right)d\lambda }$, 
  where $t \equiv (t_0,t_1)$ with
  $t_0 \in \R^{|\gamma|}$ and $t_1 \in \R^{d_n} \backslash \R^{|\gamma|}$,
  $m_{a,\gamma}(\cdot)$ is the spectral density of the
  $\gamma-$dimensional Gaussian process with rescaling level $a$, 
  and
  $\psi $ is in the complex Hilbert space $ L_2(m_{a,\gamma})$;
  its  RKHS norm is defined as 
  $\|{h_\psi }\|_{{\cH^{a,\gamma }}}^2
    = \int_{{\R^{|\gamma|} }}^{} {|\psi \left( \lambda  \right){|^2}
      {m_{a,\gamma }}\left( \lambda  \right)d\lambda }$ .
  The spectral density 
  ${m_{a,\gamma }}\left( \lambda  \right) 
    = {a^{ - |\gamma|}}{m_\gamma }\left( {\lambda /a} \right)$,  
  where ${m_\gamma }\left( \lambda  \right) 
  = {e^{ - \|\lambda \|_2^2/4}}/\left( {{2^{|\gamma|}}{\pi ^{|\gamma|/2}}} \right)$
  and $\lambda \in \R^{|\gamma|} $.

\begin{lemma}
  \label{lemma:decentering}
  Suppose 
  $\cH^{a,\gamma}$ is the RKHS 
  of the GP $\segp (\R^{d_n};\gamma, \delta_ a)$ with 
  the design measure $Q_n \equiv N(0,\xi^2I_{d_n} )$,
  $\gamma_n^*\in \{0,1\}^{d_n}$ encodes the indices of true variables, 
  $d_0 \equiv |\gamma_n^*|$, 
  $f_0 \in H^{\beta}(\R^{d_0}) \cap L_2(Q_0)$ with  
  Fourier transform satisfying 
  $|\widehat {{{f}_0}\sqrt {g_{d_0}} }\left( \lambda  \right)| 
  \succsim \|\lambda\|^{-(\alpha+d_0/2)}_2 $ 
  for all $\|\lambda\|_2 \ge 1$ and some constant $\alpha>0$,
  $f_n^* = T_{\gamma_n^*} f_0$,
  and $c_\xi \equiv \xi / \sqrt{2}$.  
  Pick a $\gamma \in  \fp(\gamma_n^*)$, 
  then for all $a>0$, 
  there exist constants $C$, $C^\prime$ and $\varepsilon_0$
  only dependent of $f_0$, such that  for all
  $\varepsilon < \varepsilon_0$, 
  \[\mathop {\inf }
    \limits_{h \in {\cH^{a,\gamma }}: 
      \|h - {f}_n^* \|_{L_2(Q_n)} < {\varepsilon}}
    \|h\|^2_{{\cH^{a,\gamma }}}
    \ge
    C {\varepsilon ^2}
    { (c_\xi a)^{|\gamma |}}
    {e^{C^\prime {\varepsilon^{-2/\alpha}}\left( {{\xi ^2}
            \wedge {a^{ - 2}}} \right)}}. 
  \]  
\end{lemma}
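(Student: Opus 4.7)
The plan is to work directly with the spectral representation of $\cH^{a,\gamma}$ recalled after Lemma \ref{lemma:decentering-upper-bd}. Any $h = h_\psi \in \cH^{a,\gamma}$ is parametrized by a $\psi \in L_2(m_{a,\gamma})$ with $\|h_\psi\|_{\cH^{a,\gamma}}^2 = \int|\psi|^2 m_{a,\gamma}$, so it suffices to prove the claimed lower bound on $\int|\psi|^2 m_{a,\gamma}$ whenever $\|h_\psi-f_n^*\|_{L_2(Q_n)}<\varepsilon$. Since both $h_\psi$ and $f_n^*$ depend only on coordinates indexed by $\gamma \supset \gamma_n^*$, integration against $Q_n$ collapses to integration against $G_{|\gamma|}$ on $\R^{|\gamma|}$, and I treat $f_n^*(s) = f_0(s_{\gamma_n^*})$ as a function on $\R^{|\gamma|}$ that is constant in the spurious coordinates $s_{\gamma \setminus \gamma_n^*}$.

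The first key step is to multiply both $h_\psi$ and $f_n^*$ by $\sqrt{g_{|\gamma|}}$ and apply Plancherel on $\R^{|\gamma|}$, converting the Gaussian-weighted constraint into $\|\widehat{h_\psi\sqrt{g_{|\gamma|}}}-\widehat{f_n^*\sqrt{g_{|\gamma|}}}\|_{L_2(\mathrm{Leb})}<\varepsilon$. The target Fourier transform factorizes as $\widehat{f_n^*\sqrt{g_{|\gamma|}}}(\lambda_1,\lambda_2) = \widehat{f_0\sqrt{g_{d_0}}}(\lambda_1)\cdot\widehat{\sqrt{g_{|\gamma|-d_0}}}(\lambda_2)$, with $\lambda_1\in\R^{d_0}$, $\lambda_2\in\R^{|\gamma|-d_0}$, the first factor lower bounded by the assumed $\|\lambda_1\|^{-(\alpha+d_0/2)}$ decay and the second being an explicit Gaussian of scale $1/\xi$. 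The map $\psi \mapsto \widehat{h_\psi\sqrt{g_{|\gamma|}}}$ is a Gaussian convolution of $\psi\,m_{a,\gamma}$ with kernel of scale $1/\xi$; its explicit form allows me to read off a pointwise lower bound on $|\psi(\lambda)|$ from any region of frequency space where $\widehat{h_\psi\sqrt{g_{|\gamma|}}}$ must be large in order to $\varepsilon$-approximate the target. The second step is then to choose a cutoff frequency $K\asymp\varepsilon^{-1/\alpha}$ in the true-variable direction so that the $L_2$ mass of $\widehat{f_n^*\sqrt{g_{|\gamma|}}}$ outside a box of radius $K$ exceeds a constant multiple of $\varepsilon$; this forces the Fourier transform of $h_\psi\sqrt{g_{|\gamma|}}$ to carry comparable mass on that region, and hence $\int|\psi|^2 m_{a,\gamma} \succsim (\text{mass})^2/\sup_{\text{region}}m_{a,\gamma}$, with $\sup m_{a,\gamma}\precsim (c_\xi a)^{-|\gamma|} e^{-C'\varepsilon^{-2/\alpha}(\xi^2\wedge a^{-2})}$ yielding the claimed exponential. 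Tracking the $\varepsilon^2$ prefactor requires noting that the $L_2$-deficit attributable to the annulus is of order $\varepsilon$, not a constant.

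The main obstacle is the spurious-coordinate handling. Because $f_n^*$ is constant in the $|\gamma|-d_0$ spurious directions, $\widehat{f_n^*\sqrt{g_{|\gamma|}}}$ is sharply concentrated on scale $1/\xi$ in $\lambda_2$, while $\psi\,m_{a,\gamma}$ lives on its natural scale $a$ in the same directions; reconciling these two scales under the Gaussian convolution, and showing that neither scale is dominated by the other uniformly in $a$, is what produces both the extra $(c_\xi a)^{|\gamma|-d_0}$ polynomial penalty for false positives and the $\xi^2\wedge a^{-2}$ dichotomy in the exponent. The two regimes $a\xi>1$ and $a\xi\le 1$ need to be checked separately because the convolution width $1/\xi$ is small or large relative to $1/a$; in each regime one verifies that the mass lost to the convolution is at most a constant fraction, so that the pointwise lower bound on $|\psi|$ survives. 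Once this is in hand, the remainder of the argument is careful bookkeeping of Gaussian normalizations, the factor $1/|\gamma|!$ absorbed into constants, and verifying the small-$\varepsilon$ threshold $\varepsilon_0$ from the requirement that $K\asymp\varepsilon^{-1/\alpha}$ dominate both $1$ and $a$.
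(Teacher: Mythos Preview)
Your overall architecture matches the paper's: multiply by $\sqrt{g_{|\gamma|}}$ and apply Plancherel, use the assumed lower bound on $|\widehat{f_0\sqrt{g_{d_0}}}|$ to show that $\widehat{f_n^*\sqrt{g_{|\gamma|}}}$ retains $L_2$ mass $\succsim\varepsilon$ on $\{\|\lambda\|>K\}$ with $K\asymp\varepsilon^{-1/\alpha}$, and conclude via the triangle inequality that $\widehat{h_\psi\sqrt{g_{|\gamma|}}}$ must carry comparable high-frequency mass, forcing the RKHS norm to be large. The $\varepsilon^2$ prefactor and the choice of $K$ are exactly as in the paper.

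There is, however, a genuine gap in the step you treat most casually, namely the passage from ``$(\psi m_{a,\gamma})*\widehat{\sqrt{g_{|\gamma|}}}$ has $L_2$ mass $\succsim\varepsilon$ on $\{\|\lambda\|>K\}$'' to a lower bound on $\int|\psi|^2 m_{a,\gamma}$. You cannot read off a \emph{pointwise} lower bound on $|\psi(\lambda)|$ from convolution mass; convolution smears, and the inverse step is not pointwise. Your fallback of a two-regime argument (``mass lost to the convolution is at most a constant fraction'') is vague and, as stated, does not produce the $\xi^2\wedge a^{-2}$ in the exponent. The paper handles this step entirely in $L_2$ and \emph{uniformly in $a$} by invoking a convolution-splitting inequality (Lemma~16 of \cite{vaart2011information}):
\[
\|(u*v)\,\chi_{2K}\|_2 \;\le\; \|u\,\chi_K\|_2\,\|v(1-\chi_K)\|_1 \;+\; \|u\|_2\,\|v\,\chi_K\|_1,
\]
with $u=\psi m_{a,\gamma}$ and $v=\widehat{\sqrt{g_{|\gamma|}}}$. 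The first term is bounded by $m_{a,\gamma}(K)^{1/2}\|h_\psi\|_{\cH^{a,\gamma}}\cdot\|\widehat{\sqrt{g}}\|_1$, contributing the factor $e^{-K^2/(8a^2)}$; the second by $(2\sqrt\pi a)^{-|\gamma|/2}\|h_\psi\|_{\cH^{a,\gamma}}\cdot e^{-\xi^2K^2/8}$. The minimum $\xi^2\wedge a^{-2}$ simply records which of the two exponentials dominates; no regime split is needed.

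A related misattribution: the $\xi^2\wedge a^{-2}$ dichotomy has nothing to do with the spurious coordinates. Both exponentials above arise from the full $|\gamma|$-dimensional objects $m_{a,\gamma}$ and $\widehat{\sqrt{g_{|\gamma|}}}$ and would appear identically if $\gamma=\gamma_n^*$. The spurious directions enter only in the easy direction, when you lower bound $\|\widehat{f_n^*\sqrt{g_{|\gamma|}}}\chi_K\|_2$ by restricting to $\{\|\lambda_0\|>K\}$ and integrating out the Gaussian factor $|\widehat{\sqrt{g_{|\gamma|-d_0}}}(\lambda_1)|^2$. Replace your pointwise/regime argument with the splitting inequality and the proof goes through.
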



The strategy of the proof follows the proof of Theorem 8 in
\cite{vaart2011information}.
In Lemma \ref{lemma:decentering}, 
the RKHS norm is expected to explode as the neighborhood in
the false positive model space shrinks.
Also note the effect of rescaling level $a$:
larger $a$ means better approximation and hence larger RKHS norm;
small $a$ means flatness in the prior sample path and hence smaller RKHS norm. 
\relax

\section{Discussion} 
\label{sec:discussion} 
We have shown here that a GP regression model equipped with stochastic variable selection can simultaneously offer adaptive regression function estimation and consistent recovery of its sparsity pattern. This result is derived under several assumptions, some of which are mathematical formalizations of reasonable statistical considerations while others are needed more for technical reasons than statistical ones. Below we offer a detailed discussion of the reasonability and limitations of the formal assumptions and explore possible relaxations. 

\subsection{Gaussian design assumption}
Assumption \ref{assumption:q} requires a Gaussian random design and 
is quite restrictive, but is needed for \h{a specific technical reason: it leads to a Karhunen-Lo\`eve eigen expansion of
the squared exponential kernel in closed form}. 
With the Karhunen-Lo\`eve expansion,  
the RKHS unit ball is isometric to an $\ell_2$ sequence space
whose metric entropy can be accurately bounded. 
Further, small ball probability estimates are available via the metric entropy method
\citep{kuelbs1993metric,li1999approximation, van2008reproducing}. 
In particular, it permits an upper bound of the prior mass condition that
is crucial for Proposition \ref{proposition:FP-prior-prob}. 
Also, the assumption $\xi^2 >2/e$ is made to simplify the calculations for
Proposition \ref{proposition:FP-prior-prob}. 
It holds in most applications as the design matrix can be standardized. 

Without the Gaussian assumption, it is not tractable to work out 
sharp upper bounds of the prior mass condition needed for 
 Proposition \ref{proposition:FP-prior-prob}.   
An alternative approach to metric entropy calculation of the RKHS unit ball 
without assuming Gaussian random design  
is to \h{extend \cite{aurzada2009small} where the RKHS unit ball} is identified as a set of 
``well-behaved'' entire functions whose metric entropy can be accurately bounded. 
However, direct extension carrying over the rescaling parameter gives 
sub-optimal lower bounds of the metric entropy, and the resulting 
upper bounds of the prior mass condition become meaningless.

A natural relaxation is to assume the Radon-Nikodym derivative of $Q_n$ with respect to $G_{d_n}$ is bounded away from 0 and $\infty$, uniformly across all $n \ge 1$.  This uniform absolute continuity implies that convergence in $\|\cdot\|_{L_2(Q_n)}$ is equivalent to convergence in $\|\cdot\|_{L_2(G_{d_n})}$. 
Clearly, the uniform boundedness assumption renders the relaxation quite limited.

\subsection{Fixed sparsity assumption}

Assumption \ref{assumption:f0} makes it clear that for all large $n$,
the true function is sparse and the support size $d_0$ does not grow with sample size. While the latter condition may appear too restrictive, it is shown in \cite{yang2015minimax} that in the case $d_n$ grows nearly exponentially in $n$, one cannot hope to consistently estimate a sparse, smooth regression function nonparametrically unless the true support size remains essentially constant.

Assumption \ref{assumption:sparsity} identifies  the true sparsity level $d_0$ 
as the minimal support size for the sequence $(f^*_n: n \ge 1)$.
Under this assumption, each of the $d_0$ inputs to $f_0$ results in a variability that is detectable in the $L_2$ topology.
This is essentially a nonparametric version of the $\beta$-min assumption in 
sparse linear regression literature.  

\subsection{Limited smoothness assumption}
Assumption \ref{assumption:smoothness} imparts only a limited amount of smoothness on $f_0$.
The first part of the assumption requires true $f_0$ to be in the Sobolev space  $H^\beta(\R^{d_0})$ in which functions satisfy  (\ref{eq:sobolev-norm}).
Analogous to \Holder smoothness, functions in $H^\beta(\R^{d_0})$ have square-integrable (weak) partial derivatives up to order $\beta$. The second part of Assumption \ref{assumption:smoothness} is adapted from \cite{vaart2011information}
and combines the probability density of the random design. 
It encodes a lower bound for the regularity of $f_0$ 
 in the spirit of the self-similarity assumption in 
 \cite{gine2010confidence, bull2012honest, szabo2015frequentist, ray2017adaptive}.
 A direct consequence of the assumption is lower bounds for the RKHS norm of the functions
 in a $L_2$ neighborhood of the true function   
 (Lemma \ref{lemma:decentering}).
 Further, the resulting lower bounds are necessary for upper
 bounding the prior mass condition in Proposition \ref{proposition:FP-prior-prob}. 
The self-similarity assumption in the Gaussian sequence model can be
written as the convolution of $\hat{f}_0$ and a sum of delta functions
evaluated at some frequencies. 
This convolution reflects the fixed design nature of Gaussian sequence model. 
In contrast, with random design, 
it is reasonable to weigh ``signals'' at different frequencies differently. 

Note that the decay rate of a convolution is bounded above by the sum of the decay rates of the functions convolved when they are in both $L_1$ and $L_\infty$.  As $\widehat{\sqrt{g_{d_0}}}$ is a Gaussian function decaying exponentially, $\widehat{\sqrt{g_{d_0}}} \in L_1 \cap L_\infty$. Part 1 implies $\hat{f}_0\in L_1 \cap L_\infty$. Therefore, part 2, coupled with part 1, requires $|\widehat {{f_0}\sqrt {g_{d_0}} }\left( \lambda  \right)|$ to decay approximately at the same rate as $|\hat{f}_0(\lambda)|$ as $\|\lambda\| \to \infty$, that is, $|\hat{f}_0(\lambda)| \succsim \|\lambda\|^{-(\alpha+d_0/2)}_2$ as $\|\lambda\| \to \infty$. 

One concern is whether there exists an $f_0$ satisfying Assumption \ref{assumption:smoothness}. The answer is yes. As an example, let $\hat f_0(\lambda) = (1 + \|\lambda\|)^{-r}$ with $r = \alpha + d_0/2 > \beta + d_0/2$. Then $f_0 \in H^\beta(\R^{d_0})$ and $\widehat{f_0 \sqrt{g_{d_0}}}(\lambda) = \hat f_0 * \widehat{\sqrt{g_{d_0}}} (\lambda) \ge \int_{\|t\| \le 1} \hat f_0(\lambda - t) \widehat{\sqrt{g_{d_0}}}(t) dt \succsim ( 2+ \|\lambda\|)^{-r} \asymp \|\lambda\|_2^{-(\alpha + d_0/2)}$. 
More such functions could be constructed as long as the tail decay rate of $\hat f_0$ is maintained. 

The limited smoothness assumption rules out true functions that are infinitely differentiable. This assumption is key to our proof strategy which exploits a polynomial slowdown in posterior contraction rate when spurious predictors are included in the model. Similar rate slowdown also manifests for infinitely smooth true functions, but the depreciation is only logarithmic \citep{van2009adaptive}. While our proof strategy fails to exploit such subtler rate drops, variable selection consistency may still hold in these cases. Indeed, for Bayesian linear regression with variable selection, posterior contraction rates slow down only by a multiplicative constant when spurious variables are included, and yet variable selection consistency holds in such cases. 

Curiously, the results presented in this paper could be applied to construct a modified nonparametric regression model with guaranteed variable selection consistency, no matter the level of smoothness of the true function. One can augment the design matrix with an additional, independent Gaussian ``covariate'' $Z$, and create a new response $Y_i^\prime = Y_i + g_0(Z_i)$ where $g_0$ is a known function of limited smoothness. 
Now, for the new design $(X,Z)$ and the modified response data $Y^\prime$, the underlying true function has limited smoothness, and hence our results guarantee asymptotically accurate recovery of the important coordinate variables of $X$ in addition to the the synthetic variable $Z$. However, the resulting posterior convergence rate may be slower than optimal if $f_0$ were smoother than $g_0$.

\subsection{Restricted design dimension growth assumption}

Assumption \ref{assumption:d} restricts the applicability of our result only up to a design size 
$\log (d_n) \precsim n^{d_0/(2\beta + d_0)}$. Consequently, we are restricted to $\log(d_n) \ll n$ when $\beta$ is large. This is a serious limitation because as shown in \citep{comminges2012tight}, with true sparsity fixed at $d_0$, variable selection consistency should hold for larger design sizes, up to the limit $\log(d_n) = O(n)$. Notice however that we prove results for a Bayesian estimation method that simultaneously infers the sparsity patter $\Gamma$ and the regression function $f$, and offers a near minimax optimal estimation of the latter by adapting to the true smoothness level. It is this adaptation that imposes the stricter bound on $d_n$, beyond which the estimation error rate is dominated by the variable selection penalty which does not worsen polynomially between correct selection and false positive selections.

\h{It is unclear at the moment whether the Bayesian estimation model studied here, or any other model which offers smoothness adaptive function estimation, could actually achieve variable selection consistency with $d_n$ growing faster than our bound. However, if one were to sacrifice on adaptive function estimation, variable selection consistency may be achieved even when $n^{d_0/(2\beta  + d_0)} \ll \log (d_n) \precsim n$ under a variation of our GP regression model where the random rescaling component (Assumption \ref{assumption:A}) is replaced with a dimension-specific deterministic rescaling
\[
\pi_{n,d}(a) = \mathds{1}(a = n^{1/(2\underline\beta + d)}),
\]
for every $n \in \N$, $d \in \{1, \ldots, d_n\}$, where $\underline \beta$ is a fixed, small positive scalar.
This modification grants variable selection consistency up to design dimensions $d_n$ with $\log (d_n) \precsim n^{d_0 / (2\underline\beta + d_0)}$ as long as the true smoothness $\beta > \underline \beta$.
However, the posterior contraction rate under the deterministically rescaled GP prior is $O(n^{ -\underline\beta/(2\underline\beta + d_0)})$ (up to a logarithmic factor of $n$), which could be much slower than the optimal rate $n^{ -\beta/(2\beta + d_0)}$. That is, in the extreme case $\log (d_n) \precsim n$, we can pick a small positive $\underline{\beta}$ for the deterministic rescaling to guarantee variable
selection consistency at the cost of estimation accuracy. 
}

\subsection{Separating variable selection from estimation} 
The discussion in the above two subsections indicates that one could gain on variable selection consistency by sacrificing on optimal function estimation. This leads to a much broader question. Should the two inference goals, optimal function estimation and consistent variable selection, be separated, and perhaps approached in a two-stage manner? We believe the answer is {\it yes}. A two-stage approach may sound contradictory to our adopted position of Bayesian modeling and inference. But in reality, the Bayesian paradigm, when augmented with decision theoretic considerations, may offer an incredibly fertile ground for exploring a rigorous two-stage approach. For example, one may adapt \cite{hahn2015decoupling} and estimate $\gamma$ by the formal Bayes estimate $\hat \gamma_B$ under the loss function $L((f,\gamma), (\hat f, \hat \gamma))=\|f - \hat f\|^2_{L_2(Q_n)} + \lambda J(\hat\gamma)$, for some model complexity penalty function $J(\gamma)$ and penalty tuning parameter $\lambda > 0$. The Bayes estimate may be computed as $\hat\gamma_B = \arg\min_{\gamma}\{\|\bar f - \bar f_{\gamma} \|_{L_2(Q_n)}^2 + \lambda J(|\gamma |)\}$
where $\bar f$ is the posterior mean of $f$ and
$\bar f_{\gamma} = \int \bar f(x) \prod_{j:  \gamma_j = 0}G_1(dx_j)$
is the projection of $\bar f$ along $\gamma$
under $L_2(Q_n)$. Whether such computations are feasible and result in consistent estimation of $\gamma$ remain to be seen. It seems likely that the additional penalty component may help resolve the true model from false positive models, even if the posterior weights assigned to the latter were not polynomially smaller. 
      
      Besides potential theoretical gains, such a Bayesian decision theoretic approach is appealing on philosophical grounds alone. Any statistical analysis may have any number of inference goals and no single estimation method may be universally optimal. For example, cross-validation based LASSO is known to offer great prediction accuracy while suffering from enhanced false detection \cite{bogdan2015slope}. A Bayesian decision theoretic approach may help unify such disparate goals where the analyst fits a single model to the data, but produces different estimates for different quantities by utilizing appropriately chosen loss function for each task. For example, in the adaptation described above, one may still report the posterior mean $\bar f$ as the Bayes estimate of $f$ (under the integrated square loss) while producing a sparse estimate $\hat \gamma_B$ for variable selection according the loss function discussed above. It will be exciting to carry out rigorous posterior asymptotic behavior of such formal Bayes estimates.

\bibliographystyle{imsart-nameyear}
\bibliography{GP-VS-consistency-bib}

\begin{supplement}
\textbf{Supplement to ``Variable selection consistency of Gaussian process regression''}.
This Supplement contains additional results and proofs in the text. 
\end{supplement}

\newpage

\section{Supplement to ``Variable selection consistency of Gaussian process regression''}
\label{sec:ss}

The supplement file contains complete proofs for
Theorem  \ref{thm:overall-contraction},
Proposition \ref{proposition:FP-prior-prob},
Lemma   \ref{lemma:metric-entropy},
Lemma \ref{lemma:decentering-upper-bd},
Lemma   \ref{lemma:decentering} in the paper
``Variable selection consistency of Gaussian process regression''.

\subsection{Proof of Theorem  \ref{thm:overall-contraction}}
\label{sec:overall-contraction-proof}

\begin{proof}
  It suffices to show there exist sets (sieve) $\bB_n$, such that the following three conditions hold for all sufficiently large $n$:
  \[\begin{array}{l}
      \Pi_n \left( {\|{W^{A,\Gamma }} - {f_n^*}\|_{L_2(Q_n)} \le {\varepsilon _n}} \right)
      \ge {e^{ - n\varepsilon _n^2}}\\
      \Pi_n \left( {{W^{A,\Gamma }} \notin {\bB_n}} \right)
      \le {e^{ - 4n\varepsilon _n^2}}\\
      \log N\left( {{\varepsilon _n},{\bB_n},\| \cdot \|_{L_2(Q_n)}} \right)
      \le n\varepsilon _n^2 . 
    \end{array}\]


  \textbf{Prior mass condition}

  Assumption $\beta > d_0/2$ implies
  $1/{\varepsilon _n} \ge {C_H}{\left( {a\xi } \right)^{d_0/2}}$
  and $a \xi \log (1/\varepsilon_n)>d_0$
  hold for every $a \in [C(1/\varepsilon_n)^{1/\beta},2 C(1/\varepsilon_n)^{1/\beta}]$
  with some constant $C$. 
  We can apply Lemma \ref{lemma:centered-sbp} and  
  Lemma \ref{lemma:decentering-upper-bd} to obtain the following lower bound 
  \[\begin{array}{lll} 
      && \Pi_n  \left( {\|{W^{A,{\gamma _n^*}}} - {f_n^*}\|_{L_2(Q_n)}
         \le 2 {\varepsilon _n}} \right)
      \\
      &\ge& \int_{K_n}
            ^{2K_n }
            {\Pi_n \left( {\|{W^{a,{\gamma _n^*}}} - {f_n^*}\|_{L_2(Q_n)}
            \le 2{\varepsilon _n}} |a \right)
            \Pi_n\left( da \right)} \\
      &\ge&
            \int_{K_n } ^{2K_n}
            {{e^{ - \phi _{{f_n^*}}^{a,{\gamma _n^*}}\left( { {\varepsilon _n}} \right)}}
            \pi_n \left( a \right)da} \\
      &\ge&
            \int_{K_n} ^{2K_n}
            {{e^{
            - \left( {C{a^{d_0}}
            + {C^\prime}{a^{d_0}}
            \log {{\left( {a/{\varepsilon _n}} \right)}^{d_0 + 1}}} \right)}}
            \pi_n\left( a \right)da} \\
      &\ge&
            {e^{ - C{\varepsilon ^{ - d_0/\beta }}
            \log {{\left( {1/{\varepsilon _n}} \right)}^{d_0 + 1}}}}  
    \end{array}\]    
  where $K_n= C{{\left( {1/{\varepsilon _n}} \right)}^{1/\beta }}$, 
  ${\varepsilon _n} \asymp  {n^{ - 1/\left( {2 + {d_0}/\beta } \right)}}
  (\log n) ^{\kappa_1} $ and  
  ${\kappa _1} = \frac{d_0+1}{ {2 + {d_0}/\beta } }$
  such that quantity in  the exponent satisfies 
  ${\varepsilon_n ^{ - d_0/\beta }}
  \log {{\left( {1/{\varepsilon _n}} \right)}^{d_0 + 1}}
  \precsim n\varepsilon _n^2$. 
  We can achieve
  \[{\Pi _n}
    \left( {\|{W^{A,{\gamma _n^*}}} - {f_n^*}\|_{L_2(Q_n)}
        \le {\varepsilon _n}} \right)
    \ge
    {e^{ - \frac{1}{2}n\varepsilon _n^2}}\]
  by choosing ${\varepsilon _n}$ to be a large multiple of 
  ${n^{ - 1/\left( {2 + {d_0}/\beta } \right)}}
  (\log n) ^{\kappa_1}$.
 
  Therefore, with prior mass of model $\gamma_n^*$ satisfying   
  $\Pi_n \left( {\Gamma  = {\gamma _n^*}} \right) 
  = \Pi_n (|\Gamma| = d_0) \binom{d_n}{d_0}^{-1} 
  \ge 
  {e^{ - \frac{1}{2}n\varepsilon _n^2}}$,  
  \[\begin{array}{lll}
     && \Pi_n \left( {\|{W^{A,\Gamma }} - {f_n^*}\|_{L_2(Q_n)}
      \le {\varepsilon _n}} \right) \\ 
      &\ge& \Pi_n \left( {\Gamma  = {\gamma _n^*}} \right)
            \Pi_n \left( {\|{W^{A,{\gamma _n^*}}} - {f_n^*}\|_{L_2(Q_n)}
            \le {\varepsilon _n}} \right)\\
      &\ge& {e^{ - n\varepsilon _n^2}}.   
\end{array}\]

\textbf{Sieve construction}
  
  The sieve $\{\bB_n\}$ is constructed as
  \[{\bB_n} =
    \bigcup\nolimits_{\gamma  \in {{\left\{ {0,1} \right\}}^{d_n}}:
      |\gamma|\le \underline{d}_n }
    { {\bB_{n,\gamma }}} \]
  where $ \underline{d}_n = C{\left( {n\varepsilon _n^2} \right)^{1/\rho }}$
  for some constant $C$ 
  and $\bB_{n,0^{d_n}} = [-M_n,M_n]$; 
  for $\gamma \neq {0}^{d_n}$, 
  \[{\bB_{n,\gamma }}
  =
    {M_n}\sqrt {{r_n}} \cH_1^{r_n,\gamma }
    + {\varepsilon _n}{\bB_{1,\gamma }}  , \] 
where ${\bB_{1,\gamma }}$ is the unit ball in the Banach space $T_\gamma^{d_n} L_2(\R^{|\gamma|})$
indexed by $\gamma$.
  $M_n$, and $r_n$ 
  are specified such that 
  $M_n^2 \asymp  n\varepsilon _n^2$, and 
  $r_n^{|\gamma |}
  { {\log ^{|\gamma | + 1} \left( n \right)} }
  \asymp 
  n\varepsilon _n^2 $. 
  The choice of $r_n$ depends on $\gamma$ 
  but for ease of notation $\gamma$ is dropped. 
  To apply Lemma \ref{lemma:metric-entropy} and Lemma \ref{lemma:centered-sbp}, it requires
  $r_n^{|\gamma |} \precsim \varepsilon _n^{ - 2}$.
  Clearly, the choice of $r_n$ satisfies this requirement.

  \textbf{Verifying condition
    $\Pi_n \left( {{W^{A,\Gamma }} \notin {\bB_n}} \right)
    \le {e^{ - 4n\varepsilon _n^2}}$} 

  For $\gamma = 0^{d_n}$, the prior on the regression function is $N(0,1)$, 
  \[\begin{array}{lll}
      \Pi_n \left( {{W^{{0^{d_n}}}} \notin {\bB_n}} \right) 
      &\le & \Pi_n \left( {{W^{{0^{d_n} }}} \notin {\bB_{n,{0^d}}}} \right)\\
      &= & 2\left( {1 - \Phi \left( {{M_n}} \right)} \right)\\
      &\le& \frac{2}{{\sqrt {2\pi } {M_n}}}{e^{ - M_n^2/2}} 
    \end{array}\]
  where $\Phi$ denotes standard Normal cdf.
  By choosing $M_n$ to be a large multiple of $n\varepsilon _n^2$, 
  $\Pi_n \left( {{W^{{0^{d_n} }}} \notin {\bB_n}} \right)
  \le {e^{ - 4n\varepsilon _n^2}}$ holds for all sufficiently large $n$.

  In light of Lemma 4.7 of \cite{van2009adaptive},
  the nesting property 
  \[{M_n}\cH_1^{a,\gamma } + {\varepsilon _n}{\bB_{1,\gamma }}
    \subseteq {B_{n,\gamma }}\] 
  holds for every $a\in [1/\xi, r_n]$.
  By Borell's inequality, for every $a \in [1/\xi,r_n]$ and $\gamma \neq 0^{d_n} $, 
  \[\begin{array}{lll}
      \Pi_n \left( {{W^{a,\gamma }} \notin {\bB_n}} \right)
      &\le&
            \Pi_n \left( {{W^{a,\gamma }} \notin {\bB_{n,\gamma }}} \right)\\
      & \le&
             \Pi_n \left( {{W^{a,\gamma }} \notin {M_n}\cH_1^{a,\gamma }
             + {\varepsilon _n}{\bB_{1,\gamma }}} \right)\\
      &\le&
            1 - \Phi \left(
            {{\Phi ^{ - 1}}\left(
            {{e^{ - \phi _0^{a,\gamma }\left( {{\varepsilon _n}} \right)}}} \right)
            + {M_n}} \right)\\
      &\le&
            1 - \Phi \left( {{\Phi ^{ - 1}}\left( {{e^{ - \phi _0^{{r_n},\gamma }\left( {{\varepsilon _n}} \right)}}} \right) + {M_n}} \right)
    \end{array}\]
  where last inequality is because 
  ${e^{ - \phi _0^{a,\gamma }\left( {{\varepsilon _n}} \right)}}
  = \Pi_n \left( {\|{W^{a,\gamma }}\|_{L_2(Q_n)} \le {\varepsilon _n}} \right)$
  is decreasing in $a$.

  In light of Lemma \ref{lemma:centered-sbp},
  $ {M_n} \ge 4\sqrt {\phi _0^{{r_n},\gamma }\left( {{\varepsilon _n}} \right)} $
  holds for all sufficiently large $n$. 
  Since ${e^{ - \phi _0^{{r_n},\gamma }\left( {{\varepsilon _n}} \right)}} < 1/4$ holds
  for all small enough $\varepsilon_n$,
  then it follows 
  ${M_n} \ge
  - 2{\Phi ^{ - 1}}\left( {{e^{ - \phi _0^{{r_n},\gamma }\left( {{\varepsilon _n}} \right)}}} \right)$ and
  the above inequality is further upper bounded by
  \[1 - \Phi \left( {{M_n}/2} \right) \le {e^{ - M_n^2/8}}.\] 
  So for every $\gamma \in \{0,1\}^{d_n}  \backslash {0}^{d_n} $, the following  
  \[\begin{array}{lll}
      && \Pi_n \left( {{W^{A,\gamma }} \notin {\bB_n}} \right) \\
      &\le&
            \Pi_n \left( {{W^{A,\gamma }} \notin {\bB_{n,\gamma }}} \right)\\
      &\le&
            \int_{1/\xi}^{{r_n}}
            {\Pi_n \left( {{W^{a,\gamma }} \notin {M_n}\cH_1^{a,\gamma }
            + {\varepsilon _n}{\bB_{1,\gamma }}} \right)
            \pi_n \left( {a|\gamma } \right)da}
            + \Pi_n \left( {A > {r_n}|\gamma } \right)\\
      &\le&
            {e^{ - {M_n^2}/8}}
            + {e^{ - C_2 r_n^{|\gamma |}\log^{|\gamma|+1} {\left( {{r_n}} \right)}
            + C_3 \log(|\gamma|)}}\\
      &\le& \frac{1}{2}{e^{ - 4n\varepsilon _n^2}}
    \end{array}\]
  holds for all sufficiently large $n$.
  Hence,
  \[\begin{array}{lll}
      \Pi_n \left( {{W^{A,\Gamma }} \notin {\bB_n}} \right)
      &=&
          \sum\nolimits_{\gamma  \in {{\left\{ {0,1} \right\}}^d}}
          {\Pi_n \left( {{W^{A,\gamma }} \notin {\bB_n}} \right)
          \Pi_n \left( {\Gamma  = \gamma } \right)} \\
      &\le & \frac{1}{2}{e^{ - 4n\varepsilon _n^2}} 
             +
             \sum\nolimits_{\gamma  \in {{\left\{ {0,1} \right\}}^{d_n} }:
             |\gamma|> \underline{d}_n}
          \Pi_n \left( {\Gamma  = \gamma } \right) \\
      &\le &
              {e^{ - 4n\varepsilon _n^2}}. 
    \end{array}\]

\textbf{Verifying condition
  $\log N\left( {{\varepsilon _n},{\bB_n},\| \cdot \|_{L_2(Q_n)}} \right)
  \le n\varepsilon _n^2$}

Clearly,
$N\left( {{\bB_{n,{0^{d_n} }}},{\varepsilon _n},\|\cdot\|_{L_2(Q_n)}} \right)
= 2{M_n}/{\varepsilon _n}$.

In light of Lemma \ref{lemma:metric-entropy},
for ${M_n}\sqrt{{r_n}} >2 \varepsilon_n$, 
the metric entropy of 
${M_n}
\sqrt{{r_n}} \cH_1^{r_n,\gamma } + {\varepsilon _n}{\bB_{1,\gamma }}$ is bounded above: 
\[\begin{array}{lll}
   && \log N\left( {2{\varepsilon _n},
    {M_n}\sqrt{{r_n}} \cH_1^{r_n,\gamma }
    + {\varepsilon _n}{\bB_{1,\gamma }}
    ,\| \cdot \|_{L_2(Q_n)}} \right) \\ 
    &\le&
          \log N\left( {{\varepsilon _n},
          {M_n}\sqrt{{r_n}} \cH_1^{r_n,\gamma },
          \| \cdot \|_{L_2(Q_n)}} \right) 
    \\
    &\precsim &
                r_n^{|\gamma |}
                \log {\left( {{M_n}
                \sqrt{ r_n} {\varepsilon _n^{-1}}} \right)^{|\gamma | + 1}}
                / |\gamma|!. 
  \end{array}\]
Since $\log \left( {{M_n}\sqrt{r_n} {\varepsilon _n^{-1}}} \right)
\asymp  \log \left( {1/{\varepsilon _n}} \right) \asymp  \log (n)$,
the above metric entropy is further bounded above by
\[ \log N\left( {2{\varepsilon _n},
    {M_n}\sqrt{r_n } \cH_1^{r_n,\gamma }
    + {\varepsilon _n}{\bB_{1,\gamma }},
      \|\cdot\|_{L_2(Q_n)}} \right)
  \precsim
  r_n^{|\gamma |}\log {\left( {1/{\varepsilon _n}} \right)^{|\gamma | + 1}}
  \asymp  n\varepsilon _n^2. \]
The bound holds for all $\gamma$ due to the choice of $r_n$. 
 

Then note the bound  
$\log \left( {\sum\nolimits_{i = 1}^n {{x_i}} } \right)
\le
\log n +
{\log \left( {\mathop {\max }\nolimits_i {x_i}} \right)} $,  
and the bound $\log(x\vee y) \le \log(x) + \log(y)$ for $x,y>1$,  
it follows,
\[\begin{array}{lll}
    \log N\left( {{\varepsilon _n},{\bB_n},\| \cdot \|_2} \right)
    &\precsim & 
               \log ({2^{\underline{d}_n}}-1) + 
                \frac{1}{2} n\varepsilon _n^2 +
               \log (\frac{{2M_n}}{\varepsilon_n }) 
  \end{array}\]
where $\underline{d}_n \prec n\varepsilon _n^2 $ by construction and
$\varepsilon _n$ is some multiple of
${n^{ - 1/\left( {2 + {d_0}/\beta } \right)}}{\log ^\kappa }\left( n \right)$. 

\end{proof}
 
\subsection{Proof of Proposition \ref{proposition:FP-prior-prob}}
\label{sec:FP-prior-prob-proof}
 
Before the proof of Proposition \ref{proposition:FP-prior-prob}, 
first review some basic properties of Lambert $W$ function.
Lambert $W$ function defines the product log function:
\[x{e^x} = y \Leftrightarrow x = W\left( y \right). \]
In the proof of Proposition \ref{proposition:FP-prior-prob},
we need to solve an equation like 
$\left( {x - {c_1}} \right){e^x} = {c_2}$ for $x$.
With Lambert $W$ function, the solution is in closed form:
${x^*} = W\left( {{c_2}{e^{ - {c_1}}}} \right) + {c_1}$. 
The following lemma is useful for our calculations;
a proof can be found in the end of the Supplement. 
  \begin{supplemma}{5}
    \label{lemma:LambertW} 
The Lambert $W(\cdot)$ function is strictly increasing. If $x = W\left( y \right)>1$, then \(y > {e^{W(y)}} > y/\log \left( y \right)\). Furthermore, if $x = W(y) \in (0,1]$, then \(y \ge W\left( y \right) \ge y/e.\)
\end{supplemma}

With the above preparation,
here comes the proof of Proposition \ref{proposition:FP-prior-prob}. 

\paragraph{ Proof of Proposition \ref{proposition:FP-prior-prob}}

\begin{proof}
  As in Lemma 1 of \cite{castillo2008lower},
  Lemma 1 in \cite{ghosal2007convergence},
  or Lemma 5 in \cite{barron1999consistency}, 
  it suffices to show
  for some ${\varepsilon _n}$ with 
  ${\varepsilon _n}\to 0$ and $n \varepsilon _n^2\to \infty $, the following holds
  \begin{equation}
    \label{eq:lemma-FP-fraction}
    \frac{
      \Pi_n \left( {\Gamma  \in \fp (\gamma_n^*),{E_n}} \right)}
    {\Pi_n \left( {{B_{n}}\left( {{f_n^*},{\varepsilon _n}} \right)} \right)} 
    =
    o \left( {e^{ - 4n\varepsilon _n^2}} \right)
  \end{equation}
  where
  ${E_n}
  = \left\{ {f: \|f - {f_n^*}\|_{L_2(Q_n)} \le M{\varepsilon _n}} \right\}$
  as defined in the main text, 
  and 
  ${B_{n}}\left( {{f_n^*},{\varepsilon _n}} \right)
  = \left\{ {f: 
      KL \left( {{P_{{f_n^*}}},{P_f}} \right) \le \varepsilon _n^2,
        {V_{2,0}}\left( {{P_{{f_n^*}}},{P_f}} \right) \le \varepsilon _n^2} \right\}$
    is the KL neighborhood of the truth. 
    To prove (\ref{eq:lemma-FP-fraction}), we show the following:
    \[\begin{array}{l} 
        \sum\nolimits_{\gamma  \in \fp (\gamma_n^*),
        {|\gamma | \precsim \varepsilon _n^{ - 2/\alpha }}}
        \Pi_n \left( {\|W^{A,\gamma} - {f_n^*}\|_{L_2(Q_n)}
        \le M{\varepsilon _n}} \right)
        \le {e^{ - 5n\varepsilon _n^2}}
        \\
        \sum\nolimits_{\gamma  \in \fp (\gamma_n^*),
        |\gamma | \succsim \varepsilon _n^{ - 2/\alpha }}
        \Pi_n  \left( {|\Gamma | = |\gamma |} \right)
        \le {e^{ - 5n\varepsilon _n^2}}
        \\
        \Pi_n \left( {{B_{n}}\left( {{f_n^*},{\varepsilon _n}} \right)} \right)
        \ge
        \Pi_n \left( {{B_{n}}\left( {{f_n^*},{\varepsilon _n}} \right)}
        \cap \{\Gamma = \gamma_n^*\} \right)
        \ge {e^{ - n\varepsilon _n^2}}
      \end{array}\]

    \textbf{Denominator}  
        
    In the nonparametric regression with Normal error,
    KL divergence is equivalent to $L_2(Q_n)$ norm: 
    $K\left( {{P_{{f_n^*}}},{P_f}} \right)
    = \frac{1}{{2{\sigma ^2}}}\|{f_n^*} - f\|_{L_2(Q_n)}^2$ and
    the KL variation 
    ${V_{2,0}}\left( {{P_{{f_n^*}}},{P_f}} \right)
    = \frac{1}{{{\sigma ^2}}}\|{f_n^*} - f\|_{L_2(Q_n)}^2$. 
    Therefore, the denominator has the lower bound in $L_2(Q_n)$ norm: 
    \[\Pi_n \left( {{B_n}\left( {{f_n^*},{\varepsilon _n}} \right)} \right)
      \ge
      \Pi_n \left(\|W^{A,\gamma_n^*} -
          {f_n^*}\|_{L_2(Q_n)} \le \sigma {\varepsilon _n} \right)
      \Pi_n \left( {\Gamma  = {\gamma_n^*}} \right) \]
    where     $\Pi_n \left( {\Gamma  = {\gamma_n^*}} \right) $
    is the prior probability of the true model. 
    In light of the concentration inequality,
    Lemma \ref{lemma:centered-sbp} and  
    Lemma \ref{lemma:decentering-upper-bd},
    \[\begin{array}{lll}
       && \Pi _n \left( 
        \|W^{A,\gamma_n^*} - {f_n^*}\|_{L_2(Q_n)} \le \sigma
          {\varepsilon _n} \right)\\ 
        &\ge& \int_{K_n}
              ^{2K_n}
              {\Pi_n \left( 
              \|W^{a,\gamma_n^*}  - {f_n^*}\|_{L_2(Q_n)} \le \sigma {\varepsilon _n}| a \right)
              \Pi_n\left( da \right)} \\
        &\ge&
              \int_{K_n}
              ^{2K_n }
              {{e^{ - \phi _{{f_n^*}}^{a,{\gamma_n^*}}\left( {\sigma {\varepsilon _n}/2} \right)}}
              \Pi_n \left( da \right)} \\
        &\ge&
              \int_{K_n }
              ^{2K_n }
              {{e^{
              - \left( {C{a^{d_0}}
              + {C^\prime}{a^{d_0}}
              \log {{\left( {a/{\varepsilon _n}} \right)}^{d_0 + 1}}} \right)}}
              \pi_n\left( a \right)da} \\
        &\ge&
              {e^{ - C{\varepsilon_n ^{ - d_0/\beta }}
              \log {{\left( {1/{\varepsilon _n}} \right)}^{d_0 + 1}}}}  
      \end{array}\]   
    where $K_n = C{{\left( {1/{\varepsilon _n}} \right)}^{1/\beta }}$, 
    ${\varepsilon _n} \asymp  {n^{ - 1/\left( {2 + {d_0}/\beta } \right)}}
    (\log n) ^{\kappa_1} $ and 
    ${\kappa _1} =\frac{|\gamma_n^*|+1}{2 + {d_0}/\beta } $. 
    Note for $\gamma_n^*=0^{d_n}$, the prior on the mean is
    $W \sim N\left( {0,1} \right)$. 
    Denote standard Normal density function as $\phi(\cdot)$, then
    \[\begin{array}{lll} 
        \Pi_n \left(
        \|W - {f_n^*}\|_{L_2(Q_n)} \le \sigma {\varepsilon _n} \right)
        &=&
            \Pi_n \left( {{f_n^*} - \sigma {\varepsilon _n}
            \le f
            \le {f_n^*} + \sigma {\varepsilon _n}} \right)\\
        &\ge& 2\sigma {\varepsilon _n}
              \phi \left( {\left( {{f_n^*} + \sigma {\varepsilon _n}} \right)
              \vee \left( {{f_n^*} - \sigma {\varepsilon _n}} \right)} \right)\\
        &\ge& {e^{ - n\varepsilon _n^2}}
      \end{array}\]
    holds for every sufficiently large $n$.

    \textbf{Numerator} 

    In light of the concentration inequality
    (\ref{eq:concentration-inequality}),
    \begin{eqnarray}
      \label{eq:FP-Num}
      & &
          \Pi_n \left(
          {\|W^{A,\gamma} - {f_n^*}\|_{L_2(Q_n)}
          \le M{\varepsilon _n}} \right)
          \nonumber \\  
      &=& \int 
          {\Pi_n \left( {\|W^{A,\gamma} - {f_n^*}\|_{L_2(Q_n)} \le M{\varepsilon _n}
          | a} \right)
          \Pi_n \left( {da} | \gamma \right)} \nonumber \\ 
      &\le& \int_{0}^\infty
            {{e^{ - \phi _{{f_n^*}}^{a,\gamma }\left( {M{\varepsilon _n}} \right)}}
            \Pi_n \left( {da}|\gamma \right)},        
    \end{eqnarray} 
    where $\Pi_n(\cdot | \gamma)$ denotes the prior on $A$ conditional on model index parameter $\gamma$. 
    The sufficiently large constant $M$ can be absorbed into $\varepsilon_n$ by choosing it to be a large multiple of 
    $n^{-1/(2+d_0/\beta)}(\log n) ^{\kappa_1}$. For the ease of notation, henceforth $M$ is absorbed into $\varepsilon_n$.
    
    The integral (\ref{eq:FP-Num}) is further bounded by three parts:
    \begin{equation}
      \label{eq:lemma-FP}
      \int_0^{1/\xi } {
        {e^{ - \phi _{{f_n^*}}^{a,\gamma }\left( {{\varepsilon _n}} \right)}}
        \Pi_n \left( {da} |\gamma \right)}
      + \int_{1/\xi }^{\tau_n }
      {{e^{ - \phi _{{f_n^*}}^{a,\gamma }\left( {{\varepsilon _n}} \right)}}
        \Pi_n \left( {da} |\gamma\right)}
      + \Pi_n \left( {A >\tau_n } |\gamma\right)
    \end{equation}
    where $ \tau_n = C{\varepsilon _n}^{ - 1/\beta }$ for some
    constant $C$, 
    $\xi$ is a parameter of the dominating measure $Q_n$. 
     The three quantities are bounded respectively as follows.

     The  first quantity of (\ref{eq:lemma-FP}) is 0 due to the prior
     on $A$.
     
     The third quantity of (\ref{eq:lemma-FP}) has the desired upper
     bound $e^{-4n\varepsilon_n^2}$, because 
     $\tau_n^{|\gamma| } \succ n \varepsilon_n^2$
     for every $\gamma \in FP(\gamma_n^*)$. 


  For the second quantity of (\ref{eq:lemma-FP}),
  by Lemma \ref{lemma:decentering} and Lemma \ref{lemma:centered-sbp},
  the integral is bounded above by 
  \begin{equation}
    \label{eq:FP-integral-2} 
    D_2 \tau_n^{|\gamma|}
    \mathop {\sup }
    \limits_{a: 1/\xi  \le a  \le {\tau _n}}
    {\exp{ \left\{ - C{(c_\xi a)^{|\gamma |}}
        {\varepsilon _n^2
          {e^{c\varepsilon _n^{ - 2/\alpha }/a^2 }}- {C_2}{a^{|\gamma |}}
          +C_3 \log(|\gamma|)} \right\} }}
     \end{equation}
     where the prior density of $A | \Gamma = \gamma$ by assumption
     is upper bounded by
     ${D_2}\tau_n ^{|\gamma|-1}{e^{ - {C_2}{a^{|\gamma |}}+C_3 \log(|\gamma|)}}$. 
     
The solution to the supremum problem is 
${\tilde a}=
  \sqrt {\frac{{{c_n}}}{W_n}}$ 
where ${c_n} = c\varepsilon _n^{ - 2/\alpha }$, 
${C_n} = C_2C^{-1}c_\xi^{-|\gamma|} \varepsilon _n^{ - 2}$, 
$W_n = {{W\left( {{C_n}{e^{ - |\gamma |/2}}|\gamma |/2} \right) + |\gamma |/2}}$,
and $W$ is Lambert $W$ function.
This is solved by solving first order condition 
and verifying second order condition. 
Then the quantity (\ref{eq:FP-integral-2}) is proportional to 
\begin{equation}
  \label{eq:FP-sup}
  \tau_n^{|\gamma|}
  \exp \left\{ {{ - C\varepsilon _n^2{({c_\xi\tilde a})^{|\gamma |}}
      \left( {
          e^{W_n} 
          + {C_n}} \right)
    +C_3 \log(|\gamma|)} } \right\}.
\end{equation}

In light of Lemma \ref{lemma:LambertW}, there are two cases for $W(\cdot)$ and $W_n$.
To apply Lemma \ref{lemma:LambertW},
we need to solve ${C_n}{e^{ - |\gamma |/2}}|\gamma |/2 = e$ for
$|\gamma|$.
When $c_\xi ^2e \le 1$, 
${C_n}{e^{ - |\gamma |/2}}|\gamma |/2 \ge e$ holds
for all $|\gamma|>d_0$.  
When $c_\xi ^2e > 1$, 
there exists only one solution larger than $d_0$
for all sufficiently large $n>N(d_0,\xi)$. 
The solution is upper bounded by
$ {\bar{\zeta} _n} \equiv
  \frac{4}{{1 + 2\log {c_\xi }}}
  (1+\Delta)
  \log \left( {C_2C^{-1}\varepsilon _n^{ - 1}} \right) $
for any constant $\Delta>0$, 
and is lower bounded by 
$ {\underline{\zeta} _n} \equiv
  \frac{4}{{1 + 2\log {c_\xi }}}
  \log \left( {C_2C^{-1}\varepsilon _n^{ - 1}} \right)$ 
for all sufficiently large $n>N(\Delta) \vee N(d_0,\xi)$.

The followup analysis is divided into three regimes. 
Regime I deals with the fixed dimension and slowly growing dimension case. 
Regime II and regime III take care of growing dimension cases with
different growth rates.

\textbf{Regime I: 
$W\left( {{C_n}{e^{ - |\gamma |/2}}|\gamma |/2} \right) \ge 1$. }

In this regime, ${{C_n}{e^{ - |\gamma |/2}}|\gamma |/2} \ge e$ and  
$d_0<|\gamma|<\bar{\zeta}_n \asymp \log (\varepsilon_n^{-1})$.
By Lemma \ref{lemma:LambertW}, 
\[  \frac{1}{2}|\gamma|  C_n 
  >
  {e^{W_n}}
  >
  \frac{1}{2}|\gamma| {C_n}
  /
  \left(\log \left(|\gamma| {C_n}/2 \right) \right), \]  
and
$ \log(C_n) + \log(|\gamma|) 
  \succsim
  W_n
  \succsim
   \log({C_n}) + \log(|\gamma|)  
  - \log (\log \left( C_n \right) + \log(|\gamma|/2) )$. 
Then 
the supremum (\ref{eq:FP-sup}) is  upper bounded by
\[  {\exp \left\{ { - {C_2}
         {{\left( {\frac{{\varepsilon _n^{ - 2/\alpha }}}
                 {{\log {C_n}
                     + \log \left( {|\gamma |} \right)}}} \right)}^{|\gamma |/2}}
         +|\gamma| \log(\tau_n) + C_3 \log(|\gamma|)} \right\}
     }. \] 
Note  $\bar{\zeta}_n > |\gamma | \ge d_0 + 1$, 
the integral (\ref{eq:FP-integral-2}) is further bounded above by
\[{D_2}
  {\exp \left\{ { - {C_2^\prime}{{\left(
            {{{\varepsilon _n^{ - 2/\alpha }}}{
                \log^{-1} (\varepsilon _n^{ - 1})}}
          \right)}
        ^{\left( {{d_0} + 1} \right)/2}}
  +  \bar{\zeta}_n \log(\tau_n) + C_3\log (\bar{\zeta}_n)} \right\}}\]
for some constant $C_2^\prime$ and all sufficiently large $n$.

Since  $\alpha  < \beta \left( {1 + 1/d_0} \right)$ and
${{\bar \zeta }_n} \asymp  \log {\tau _n} \asymp  \log \left( {\varepsilon _n^{ - 1}} \right)$, 
the following holds for all sufficiently large $n$ 
\[\begin{array}{lll}
   && \sum\nolimits_{\gamma  \in \fp (\gamma_n^*):
    W\left( {{C_n}{e^{ - |\gamma |/2}}|\gamma |/2} \right) \ge 1}
      {\Pi_n \left( 
      \|W^{A,\gamma} - {f_n^*}\|_{L_2(Q_n)} \le M{\varepsilon _n} \right)} \\ 
    &\le&
          {2^{{{\bar \zeta }_n} - {d_0}}}
          {D_2}
          {e^{ - C_2^{\prime \prime} {{\left( {{{\varepsilon _n^{ - 2/\alpha }}}
          {{\log^{-1} \left( {\varepsilon _n^{ - 1}} \right)}}} \right)}^{\left( {{d_0} + 1} \right)/2}}}}\\
    &\le &\frac{1}{2}{e^{ - 5n\varepsilon _n^2}}. 
\end{array}\]

\textbf{Regime II: 
  $W\left( {{C_n}{e^{ - |\gamma |/2}}|\gamma |/2} \right) < 1$
  and
  $|\gamma| \precsim \varepsilon _n^{ - 2/\alpha }$ 
}

In this regime, 
${C_n} < {e^{|\gamma |/2 + 1}}|\gamma |/2$,
$ |\gamma|> \underline{\zeta}_n$, and 
$|\gamma |/2 \le {W_n} \le |\gamma |/2 + 1$.
Then plugging in bounds on $W_n$, 
the supremum in (\ref{eq:FP-sup}) is  upper bounded by
${\exp \{ { - {C_2}
      {{\left( {\frac{{\varepsilon _n^{ - 2/\alpha }}}
              {{|\gamma |/2 + 1}}} \right)}^{ |\gamma |/2}}
    +|\gamma| \log(\tau_n) + C_3 \log(|\gamma|)} \} }$.  
Note  $  \varepsilon _n^{ - 2/\alpha } \succsim 
|\gamma |
>
\underline{\zeta}_n > d_0 $, 
the integral (\ref{eq:FP-integral-2}) is further bounded above by
\[\exp {\left\{ - C_2^\prime 
      {{\left( {\varepsilon _n^{ - 2/\alpha }/
              \left( {\underline{\zeta }_n/2} \right)} \right)}
        ^{\underline{\zeta} _n/2}}
      +\varepsilon _n^{ - 2/\alpha } \log(\tau_n)
      + C_3^\prime \log(\varepsilon _n^{ - 1 })
  \right\}}\]
for some constant $C_2^\prime$ and all sufficiently large $n$. 
   
Since  $\alpha  < \beta \left( {1 + 1/d_0} \right)$,
$|\gamma| \precsim \varepsilon_n^{-2/\alpha} \prec n \varepsilon_n^2$, and
${{\underline{ \zeta} }_n} \asymp  \log {\tau _n} \asymp  \log \left( {\varepsilon _n^{ - 1}} \right)$, 
the following holds for all sufficiently large $n$ 
\[\begin{array}{lll}
   && \sum\nolimits_{\gamma  \in \fp (\gamma_n^*): 
    W\left( {{C_n}{e^{ - |\gamma |/2}}|\gamma |/2} \right) < 1}
      {\Pi_n \left( {
      \|W^{A,\gamma} - {f_n^*}\|_{L_2(Q_n)} \le M{\varepsilon _n}} \right)} \\ 
    &\le&
          {2^{ { C\varepsilon_n^{-2/\alpha} -{\underline{ \zeta} }_n}}}
          {D_2}
          {e^{ - C_2^\prime {{\left( {{{\varepsilon _n^{ - 2/\alpha }}}
          /(\underline{\zeta}_n/2)
          } \right)}^{ \underline{\zeta}_n/2}}}}\\
    &\le &\frac{1}{2}{e^{ - 5n\varepsilon _n^2}}. 
\end{array}\]

\textbf{Regime III: 
  $W\left( {{C_n}{e^{ - |\gamma |/2}}|\gamma |/2} \right) < 1$
  and
  $|\gamma| \succsim \varepsilon _n^{ - 2/\alpha }$ 
}

In this regime,
 $\Pi_n \left( {|\Gamma | = |\gamma |} \right) \le {e^{ - C|\gamma {|^\rho }}}$
for some $\rho  \ge ({d_0}+1)/2$,
the following holds for some constant $C^\prime$ and all sufficiently large $n$ 
\[\Pi_n \left( { \Gamma \in \fp (\gamma_n^*): |\Gamma | \ge \varepsilon _n^{ - 2/\alpha }} \right)
  \le \Pi_n \left( {|\Gamma | \ge \varepsilon _n^{ - 2/\alpha }} \right) 
  \le {e^{ - {C^\prime}
      \varepsilon _n^{ - 2\rho /\alpha }}}
  \le {e^{ - 5n\varepsilon _n^2}}. \]

Therefore, combining the three regimes and the rest ingredients,
\[\Pi _n \left( \Gamma \in \fp (\gamma_n^*),
      \{   
      \|W^{A,\Gamma} - {f_n^*}\|_{L_2(Q_n)} \le M {\varepsilon _n}\}
         \right) 
  \le
  2 {e^{ - 5n\varepsilon _n^2}} \]
and the ratio (\ref{eq:lemma-FP-fraction}) holds. 
   
\end{proof}

\subsection{Proof of Lemma   \ref{lemma:metric-entropy}}
\begin{proof}
  With some abuse of notation, 
  denote (\ref{eq:RKHS}) by $\cH^{a,\gamma}$.
  By isometry, it suffices to compute the metric entropy of
  (\ref{eq:RKHS}) 
  with respect to the $\ell_2$ metric $||\cdot||_2$.
  
  The strategy of the proof is to construct a
  $\tilde{\cH}\subset \cH^{a,\gamma}$
  and use the metric entropy of $\tilde{\cH}$ as bounds 
  for the metric entropy of $\cH^{a,\gamma}$.
  For the ease of notation, superscript `` $(\gamma)$'' of
  the eigenvector and eigenvalue notation is dropped
  as the dependence on $\gamma$ is clear. 
  
  The eigenvalue $\mu_j$ takes the form  $(2v_1/V)^{|\gamma|/2} B^m$ for some $m \in \N$. 
  Solve \[ (2v_1/V)^{|\gamma|/2} B^m = \varepsilon^2 \] for $m \in \R $ and
  the solution denoted by $m^*$ is
  \[{m^*} = {\left( {\log \left( {1/B} \right)} \right)^{ - 1}}
  \left( {2\log \left( {1/\varepsilon } \right)
      - \frac{|\gamma|}{2}\log \left( {\frac{V}{{2{v_1}}}} \right)} \right).\] 
  The assumption
  $1/\varepsilon  \ge C_H{\left( {\xi a} \right)^{|\gamma|/2}}$
  guarantees
  ${m^*}
  \asymp
  {\left( {\log \left( {1/B} \right)} \right)^{ - 1}}
  \log \left( {1/\varepsilon } \right)$.  
  
  Define 
  $ \tau
  =  \sum\nolimits_{j = 0}^{\left\lfloor {{m^*}} \right\rfloor } \binom{|\gamma|+j-1}{|\gamma|-1}
  = \binom{\left\lfloor {{m^*}} \right\rfloor +|\gamma|}{|\gamma|}$  
  where $\left\lfloor {{m^*}} \right\rfloor$ is the greatest integer less than $m^*$. 
  By construction, $\sqrt{\mu_j} \ge \varepsilon$ for all $j\le \tau$ and
   $\sqrt{\mu_j} \le \varepsilon$ for all $j> \tau$. 

  Define $ {{\tilde \cH}_\varepsilon }
  = \left\{ {\theta  \in {\cH^{a,\gamma }}:{\theta _j} = 0,
      \forall j > \tau } \right\}$.  
  ${\tilde \cH}_\varepsilon$ contains the elements in $\cH^{a,\gamma}$
  whose components after $\tau$ vanish to 0. 
  Any $\varepsilon-$cover of $\tilde{\cH}_\varepsilon$ forms
  a $\sqrt{2} \varepsilon-$cover of $\cH^{a,\gamma}$.
  To see this, suppose $\{\theta^k \}_1^N$ forms a $\varepsilon-$cover of  $\tilde{\cH}$,
  then for any $\theta \in \cH^{a,\gamma }$, 
  \[\begin{array}{lll}
      \mathop {\min }\limits_k ||\theta  - {\theta ^k}||_2^2
      &=&
          \mathop {\min }\limits_k
          \sum\nolimits_{j = 1}^\tau
          {{{\left( {{\theta _j} - \theta _j^k} \right)}^2}}
          + \sum\nolimits_{j = \tau  + 1}^\infty  {\theta _j^2} \\
      &\le&
            {\varepsilon ^2}
            + {\mu _\tau }\sum\nolimits_{j = \tau  + 1}^\infty  {\theta _j^2/{\mu _j}} \\
      &\le& 2{\varepsilon ^2}. 
    \end{array}\]
  As the $\sqrt{2}$ scaling does not matter in metric entropy calculation,
  it suffices to work with $\tilde{\cH}$.  

  \textbf{\textup{Upper bound}}

  By construction,  
  $ {{\tilde \cH}_\varepsilon }
    \supseteq
    B_2^{\tau+1} \left( \varepsilon  \right)$ 
  where $B^k_q(\varepsilon)$ denotes $k-$dimensional $\ell_q-$ball of radius $\varepsilon$:
  $B_q^k\left( \varepsilon  \right)
  = \left\{ {x:\sum\nolimits_{i = 1}^k {x_i^q}  \le {\varepsilon ^q};
      {x_j} = 0, \forall j > k} \right\}$.     
  $\tau+1$ is due to eigenvalues start from $\mu_0$,

  Note 
  ${{\tilde \cH}_\varepsilon } + B_2^{\tau+1} \left( \varepsilon  \right)
  \subseteq 2{{\tilde \cH}_\varepsilon }$, 
  standard volume argument yields 
  \[{N_{[]}}\left( {\varepsilon ,{{\tilde \cH}_\varepsilon },|| \cdot |{|_2}} \right)
    vol\left( {B_2^{\tau+1} \left( {\varepsilon /2} \right)} \right)
    \le vol\left( {2{{\tilde \cH}_\varepsilon }} \right)\]
  where
  $ vol\left( {2{{\tilde \cH}_\varepsilon }} \right)
  = {2^{\tau+1} }vol\left( {{{\tilde \cH}_\varepsilon }} \right)
  \le {2^{\tau+1} }\prod\nolimits_{i = 0}^\tau  {\sqrt {{\mu _i}} }$ 
  and 
  $ vol\left( {B_2^{\tau+1} \left( {\varepsilon /2} \right)} \right)
  = {\left( {\varepsilon /2} \right)^{\tau+1} }
  vol\left( {B_2^{\tau+1} \left( 1 \right)} \right)$.  
  Then, it follows 
  \[\log {N_{[]}}\left( {\varepsilon ,{{\tilde \cH}_\varepsilon },|| \cdot |{|_2}} \right)
    \le
    2(\tau +1) \log 2
    + (\tau+1) \log \left( {1/\varepsilon } \right)
    + \frac{1}{2}\sum\nolimits_{i = 0}^\tau  {\log {\mu _i}} \] 
  where
  $\log (\mu_0) = \frac{{{|\gamma|}}}{2}\log \left( {2{v_1}/V} \right)$,
  and 
  $\log {\mu _i}
  = \log (\mu_0) + h \log B$
  for $h=1,..., \left\lfloor {{m^*}} \right\rfloor  $ and 
  $i \in
  [\binom{|\gamma|+h-1}{|\gamma|}, \binom{|\gamma|+h}{|\gamma|}) $. 
  The exponent $h$ is the multiplicity of eigenvalues. 

  
  Note $B<1$ and total multiplicity is  
  $ \sum\nolimits_{j = 0}^{\left\lfloor {{m^*}} \right\rfloor } j 
  \binom{|\gamma|-1+j}{|\gamma|-1}  
  \ge 
  \sum\nolimits_{j = 1}^{\left\lfloor {{m^*}} \right\rfloor } 
  \binom{|\gamma|-1+j}{|\gamma|-1} 
  = \tau -1 $,  it follows 
  \[\begin{array}{lll}
      \sum\nolimits_{i = 0}^\tau  {\log {\mu _i}}  
      &=& 
          \frac{{{|\gamma|}}}{2}\log \left( {2{v_1}/V} \right)\left( {\tau  + 1} \right)
          - \sum\nolimits_{j = 0}^{\left\lfloor {{m^*}} \right\rfloor } j 
          \binom{|\gamma|-1+j}{|\gamma|-1}
          \log \left( {1/B} \right)\\
      &\le&
            \frac{{{|\gamma|}}}{2}\log \left( {2{v_1}/V} \right)
            \left( {\tau  + 1} \right) 
            -
            \log \left( {1/B} \right)
            \left( {\tau - 1} \right) . 
    \end{array}\] 

 Note ${a \xi } \log \left( {1/\varepsilon } \right) > |\gamma|$ implies 
 $m^* \asymp m^*+|\gamma|$,
 the with the definition of $\tau$ and 
  $1/\varepsilon  
  \ge 
  C_H{\left( {\xi a} \right)^{|\gamma|/2}}$, 
  \[\begin{array}{lll}
      &&
         \log {N_{[]}}\left( {\varepsilon ,{{\tilde \cH}_\varepsilon
         }, || \cdot |{|_2}} \right)\\
      &\precsim&
                 (\tau+1) \left[ {\log \left( {1/\varepsilon } \right)
                 -
                 \frac{{{|\gamma|}}}{4}\log \left( \frac{V}{2v_1}\right)} \right]
                 - \frac{1}{2}(\tau-1)
                 \log \left( {1/B} \right) \\
      &\precsim&
                 {\left( {{m^*} + {|\gamma|}} \right)^{{|\gamma|}}}
                 \log \left( {1/\varepsilon } \right)/{|\gamma|}! \\
      &\precsim&
                 {\left( \log(1/B) \right)^{{-|\gamma|}}}
                 \log {\left( {1/\varepsilon } \right)^{{|\gamma|} + 1}}/{|\gamma|}!.
    \end{array}\]

  \textbf{\textup{Lower bound}}

  Since $\tilde{\cH}_\varepsilon \subset \cH^{a,\gamma}$,
  lower bound of $\tilde{\cH}_\varepsilon $'s metric entropy also lower bounds 
  $ \cH^{a,\gamma}$'s metric entropy. 
  The fact ${\tilde{\cH}_\varepsilon } \supseteq B_2^{\tau  + 1}\left( \varepsilon  \right)$
  implies 
  \[\begin{array}{lll}
      \log N\left( {{\tilde{\cH}_\varepsilon },\varepsilon /2,|| \cdot |{|_2}} \right)
      &\ge&
            \log N\left( {B_2^{\tau  + 1}\left( \varepsilon  \right),\varepsilon /2,|| \cdot |{|_2}} \right)\\
      &\ge&
            \left( {\tau  + 1} \right)\log 2\\
      &\asymp&
               {\left( \log(1/B) \right)^{-{|\gamma|}}}
               \log {\left( {1/\varepsilon } \right)^{{|\gamma|}}}/|\gamma|,  
    \end{array}\]
  and
  $\log N\left( {\cH^{a,\gamma},\varepsilon ,|| \cdot |{|_2}} \right)
      \succsim
      {\left( {\log(1/B)  } \right)^{-|\gamma|}}
      \log {\left( {1/\varepsilon } \right)^{|\gamma|}}/|\gamma|!$.

\textbf{\textup{Bounding $\log(1/B)$}}

    Note 
  \[\begin{array}{lll}
      \log \left( {1/B} \right) 
      &=& \log \left( {1 + {v_1}/{v_2} + {v_3}/{v_2}} \right)\\
      &=& \log \left( {1 + {v_1}/{v_2} 
          + \sqrt {{{\left( {{v_1}/{v_2}} \right)}^2} 
          + 2{v_1}/{v_2}} } \right)
    \end{array}\]
  where ${v_1}/{v_2} = 1/\left( {4{a^2}{\xi ^2}} \right)$.

  When $a$ is close to 0, $\log (1/B) \asymp \log (v_1/v_2)
  \asymp \log (1/a) \precsim 1/a$.
  
  When $a$ is sufficiently large, ${v_1}/{v_2}$ is close to 0 
  and  $\sqrt {{{\left( {{v_1}/{v_2}} \right)}^2} + 2{v_1}/{v_2}} \succ {v_1}/{v_2}$. 
  With the relation $\log(1+x) \approx x$ for $x \approx 0$,
  it follows $\log {\left( {1/B} \right)^{ - 1}} \asymp  {a}{\xi}$.

  For more precise characterization,
  there exists constants $C_1, C_2$ and $C_3$ such that 
  \[{C_1}a \le \log {\left( {1/B} \right)^{ - 1}} \le {C_2} + {C_3}a\]
  holds for all $a>0$. The constants only depend on $\xi$. 
  
\end{proof}

\subsection{Proof of Lemma \ref{lemma:decentering-upper-bd}}

\begin{proof}

  The proof extends Lemma 7 of \cite{vaart2011information}
  and allows for rescaling and different dimensions. 
  Following the representation theorem,
  for $\lambda_0 \in \R^{d_0}$, 
  let
  \[ \psi \left( \lambda_0  \right)
  = \frac{{{{\hat f}_0}\left( {{\lambda_0  }} \right)}}
  {{{m_{a,\gamma_n^* }}\left( {{\lambda_0}} \right)}}
  {1_{\left( \lambda_0 \in \R^{d_0}:
        {||\lambda_0 |{|_2} < K} \right)}},\] 
  then for every $t \in \R^{d_n}$, 
  $ {{f}_n^*}\left( t \right) - {h_\psi }\left( t \right)
  = \int_{\lambda_0 \in \R^{d_0}:
    ||\lambda_0 |{|_2} \ge K}
    {{e^{i\left( {\lambda_0 ,t_0} \right)}}
      {{\hat f}_0}\left( {{\lambda_0 }} \right) 
      d\lambda_0 }$. 
    By \Holder's inequality, 
    \[\begin{array}{lll}
      |{h_\psi }\left( t \right) - {{f}_n^*}\left( t \right)|^2
      &\le &
             \int_{{\R^{d_0}}}^{}
             {|{{\hat f}_0}\left(\lambda_0  \right)| ^2
             1\left( {||\lambda_0 |{|_2} \ge K} \right)d\lambda_0 } \\
      &\le&
            ||{{f}_0}|{|^2_{{H^\beta }\left( {{\R^{d_0}}} \right)}}
            \mathop {\sup }\limits_{\lambda_0  \in {\R^{d_0}}}
            {\left( {1 + ||\lambda_0 ||_2^2} \right)^{ - \beta }}
            1\left( {||\lambda_0 |{|_2} \ge K} \right) \\
      &\precsim &
                  ||{{f}_0}|{|^2_{{H^\beta }\left( {{\R^{d_0}}} \right)}}
                  {K^{ - 2\beta}} . 
    \end{array}\]
  Therefore,  
  \[\begin{array}{lll} 
      ||{h_\psi } - {{f}_n^*}||_{L_2(Q_n)}^2
      &=& \int_{\R^{d_n}}
          {|{h_\psi }\left( t \right) - {f_n^*}\left( t \right){|^2}
          g_{d_n}\left( t \right)dt} \\ 
      &\precsim&
                 ||{{f}_0}||_{{H^\beta(\R^{d_0 }) }}^2
                 {K^{ - 2\beta}}. 
    \end{array}\]
   Choose ${K^{ - \beta  }} \propto {\varepsilon  }$ such that 
  $||{h_\psi } - {{f}_n^*}|{|_{L_2(Q_n)}} \le {\varepsilon  }$.  
  \[\begin{array}{lll}
      ||{h_\psi }||_{{\cH^{a,\gamma_n^* }}}^2
      &=& 
          \int_{\lambda_0  \in {\R^{d_0}}: ||\lambda_0 |{|_2} < K}^{} 
          {|{{\hat f}_0}\left( {{\lambda_0 }} \right){|^2}
          {m_{a,\gamma_n^* }}{{\left( {{\lambda_0 }} \right)}^{ - 1}}
          d\lambda_0 } \\ 
      &=&
          \int_{\lambda_0  \in {\R^{d_0}}:||\lambda_0 |{|_2} < K}^{}
          {|{{\hat f}_0}\left( \lambda_0  \right){|^2}
          {{\left( {2\sqrt \pi  } \right)}^{d_0}}
          {a^{d_0}}
          {e^{\frac{1}{{4{a^2}}}||\lambda_0 ||_2^2}}
          d\lambda_0  } \\
      &\le&  
            ||{{f}_0}||_{{H^\beta }\left( {{\R^{d_0}}} \right)}^2
            {\left( {2\sqrt \pi  } \right)^{d_0}}{a^{d_0}}
            {e^{\frac{1}{{4{a^2}}}{K^2}}} . 
    \end{array}\]
 Then it  holds that 
  \[|| {h_\psi }||_{{\cH^{a,\gamma_n^* }}}^2
    \precsim 
    {\left( {2\sqrt \pi  } \right)^{d_0}}
    {a^{d_0}} 
    {e^{C\varepsilon  ^{ - 2/\beta }/{a^2}}}\]
  
\end{proof}

\subsection{Proof of Lemma   \ref{lemma:decentering}}

\begin{proof}

  By Parseval's identity,
  for $h_\psi \in \cH^{a,\gamma}$,  
  \[ ||{h_\psi } -  {{f}_n^*}||_{L_2(Q_n)}^2
    = ||{\left( {{h_\psi } - f_n^*} \right)}\sqrt {g_{d_n}} ||_2^2
    = ||\widehat {h_\psi\sqrt {g_{d_n}} }
    - \widehat {f_n^*\sqrt{ g_{d_n}} }||_2^2.\]
  Therefore,
  $||{h_\psi } - f_n^*||_{L_2(Q_n)}< \varepsilon  $ implies 
  $||\widehat {{h_\psi }\sqrt {g_{d_n}} }{\chi _K}
    - \widehat {f_n^*\sqrt {g_{d_n}}}{\chi _K}||_2
    < \varepsilon $,  
  where ${\chi _K} = \left\{ {\lambda \in\R^{|\gamma|} :||\lambda |{|_2} > K} \right\}$. 
  Triangle inequality implies
  \begin{equation}
    \label{eq:lemma-decentering}
    ||\widehat {{h_\psi }\sqrt {g_{d_n}} }{\chi _K}|{|_2}
    > ||\widehat {f_n^*\sqrt {g_{d_n}} }{\chi _K}|{|_2} - {\varepsilon  }.
  \end{equation}
  
  For the right hand side of inequality (\ref{eq:lemma-decentering}),
  denote  $\R^{|\gamma|}\backslash \R^{d_0}$
  as $\R^{d_1}$, 
  with the assumption on ${f}_0$, 
  \[\begin{array}{lll}  
      && ||\widehat {f_n^*\sqrt {g_{d_n}} }{\chi _K}||_2^2 \\ 
      &\propto &
          \int \int_{(\lambda_0 , \lambda_1) \in\R^{|\gamma|}:
          ||\lambda_0 |{|_2^2} +||\lambda_1||_2^2 > K^2}
          {|\widehat {{{f}_0}\sqrt {g_{d_0}} }\left( \lambda_0  \right){|^2}}
          |\widehat{\sqrt{g_{d_1}}}(\lambda_1) |^2
          d\lambda_0 d \lambda_1 \\
      &\ge & 
             \int \int_{(\lambda_0 , \lambda_1) \in\R^{|\gamma|}: 
             ||\lambda_0 |{|_2} > K}
             {|\widehat {{{f}_0}\sqrt {g_{d_0}} }\left( \lambda_0  \right){|^2}}
             |\widehat{\sqrt{g_{d_1}}}(\lambda_1) |^2
             d\lambda_0 d \lambda_1 \\ 
      &\succsim&
                 \int_{\lambda\in\R^{d_0}: 
                 ||\lambda_0 |{|_2} > K}^{}
                 {||\lambda_0 |{|^{ -(2\alpha+d_0)}}
                 d\lambda_0 } \\
      &\succsim&
              {\left( {1/K} \right)^{2\alpha}} 
    \end{array}\]
  where $K\propto \varepsilon  ^{ - 1/\alpha}$ is chosen
  such that $||\widehat {f_n^*\sqrt {g_{d_n}} }{\chi _K}||_2$
  is lower bounded by $2\varepsilon $.

  By Lemma 16 of \cite{vaart2011information} and the representation of $h_\psi$,  
  \[\begin{array}{lll}
      && ||\widehat {{h_\psi }\sqrt {g_{d_n}} }{{\chi} _{2K}}|{|_2} \\ 
      &=&
          ||(\left( \psi {m_{a,\gamma }}  \right) * \widehat {\sqrt {g_{d_n}} })
          {  \chi _{2K}}|{|_2}\\
      &\le&
            ||\psi {m_{a,\gamma }}{ \chi _K}| {|_2}
            ||\widehat {\sqrt {g_{d_n}} }\left( {1 - { \chi _K}} \right)|{|_1}
            +
            ||\psi {m_{a,\gamma }}|{|_2}
            ||\widehat {\sqrt {g_{d_n}} }{ \chi _K}|{|_1}. 
    \end{array}\]
  The terms on the above right hand side are bounded in the following way, 
  \[\begin{array}{lll}
      ||\psi {m_{a,\gamma }} { \chi _K}||_2^2
      &=&
          \int_{\lambda \in \R^{|\gamma|}:
          ||\lambda |{|_2} > K}^{}
          {|\psi \left( \lambda  \right){|^2}
          {m_{a,\gamma }}{{\left( \lambda  \right)}^2}
          
          d\lambda } \\
      &\le&
            {m_{a,\gamma }}\left( K \right)
            \int_{\lambda \in \R^{|\gamma|}: ||\lambda |{|_2} > K}^{}
            {|\psi \left( \lambda  \right){|^2}
            {m_{a,\gamma }}\left( \lambda  \right)d\lambda } \\
      &\le&
            {\left( {2\sqrt \pi  } \right)^{ - |\gamma |}}
            {a^{ - |\gamma |}}{e^{ - \frac{1}{4}{K^2}/{a^2}}}
            ||{h_\psi }||_{{\cH^{a,\gamma }}}^2 
    \end{array}\]
  \[\begin{array}{lll}
      ||\widehat {\sqrt {g_{d_n}} }\left( {1 - { \chi _K}} \right)|{|_1}
      &=&
          \int_{\lambda \in \R^{|\gamma|}: ||\lambda |{|_2} \le K}^{}
          {{{\left( {2\sqrt {2\pi } \xi } \right)}^{|\gamma|/2}}
          {e^{ - {\xi ^2}||\lambda ||_2^2}}d\lambda } \\
      &<&
          \int_{\R^{|\gamma|}} {{{\left( {2\sqrt {2\pi } \xi } \right)}^{|\gamma|/2}}
          {e^{ - {\xi ^2}||\lambda ||_2^2}}d\lambda }\\
      & \precsim &
                   (2\sqrt{2\pi})^{|\gamma|/2} \xi ^{-|\gamma|/2} 
          < \infty 
    \end{array}\]
  \[\begin{array}{lll}
      ||\psi {m_{a,\gamma }  }||_2^2
      &=&
          \int_{{\R^{|\gamma|} }}^{} {|\psi \left( \lambda  \right){|^2}
          {m_{a,\gamma }}{{\left( \lambda  \right)}^2}d\lambda } \\
      &\le&
            {\left( {2\sqrt \pi  } \right)^{ - |\gamma |}}
            {a^{ - |\gamma |}}
            \int_{{\R^{|\gamma|} }}^{} {|\psi \left( \lambda  \right){|^2}
            {m_{a,\gamma }}\left( \lambda  \right)d\lambda } \\
      &=& {\left( {2\sqrt \pi  } \right)^{ - |\gamma |}}
          {a^{ - |\gamma |}}||{h_\psi }||_{{\cH^{a,\gamma }}}^2
    \end{array}\]

  \[\begin{array}{lll}
      ||\widehat {\sqrt {g_{d_n}} }{ \chi _K}|{|_1}
      &=&
          \int_{\lambda \in \R^{|\gamma|}:
          ||\lambda |{|_2} > K}
          {{{\left( {2\sqrt {2\pi } \xi } \right)}^{|\gamma|/2}}
          {e^{ - {\xi ^2}||\lambda ||_2^2}}d\lambda } \\
      &\le&
            {e^{ - {\xi ^2}{K^2}/8}}
            \int_{\lambda \in \R^{|\gamma|}: ||\lambda |{|_2} > K}
            {{{\left( {2\sqrt {2\pi } \xi } \right)}^{|\gamma|/2}}
            {e^{ - \frac{7}{8}{\xi ^2}||\lambda ||_2^2}}d\lambda } \\
      &\le&
            {e^{ - {\xi ^2}{K^2}/8}}
            \int {{{\left( {2\sqrt {2\pi } \xi } \right)}^{|\gamma|/2}}
            {e^{ - \frac{7}{8}{\xi ^2}||\lambda ||_2^2}}d\lambda } \\
      &\precsim&  (2\sqrt{2\pi})^{|\gamma|/2} \xi ^{-|\gamma|/2} {e^{ - {\xi ^2}{K^2}/8}}
    \end{array}\]

  Since a factor of 2 does not matter, combining bounds of RHS and LHS
  of inequality (\ref{eq:lemma-decentering})
  yields the following inequality  
  \[{a^{ - |\gamma |/2}}
    (\sqrt{2}/ \xi) ^{|\gamma|/2}
    \left( {{e^{ - \frac{1}{8}{K^2}/{a^2}}}
        + {e^{ - \frac{1}{8}{\xi ^2}{K^2}}}} \right)
    ||{h_\psi }||_{{\cH^{a,\gamma }}}
    \succsim
     ||\widehat {{h_\psi }\sqrt {g_{d_n}} }{\chi _{2K}}|{|_2}
    \ge {\varepsilon  }\]
  which is
  \[||{h_\psi }||_{{\cH^{a,\gamma }}}^2
    \succsim
    \varepsilon ^2
    {a^{|\gamma |}} c_\xi^{|\gamma|} 
    {e^{\frac{1}{4}{K^2}\left( {{a^{ - 2}} \wedge {\xi ^2}} \right)}} \]
  where $c_\xi= \xi / \sqrt{2}$. 
   
\end{proof}

\subsection{Proof of Lemma \ref{lemma:LambertW}}

\begin{proof}
  Let $f(x)=xe^x$ and $f$ is a strictly increasing, smooth map:
  $[0,\infty) \to [0,\infty)$. So its inverse is also strictly
  increasing. 

For the second claim, the first inequality follow because by the definition of $W(\cdot)$ and the assumption $x>1$,
  \[y=W(y)e^{W(y)} >e^{W(y)}. \]
  That is,
  $\log(y) >W(y)$.
  Then, $\log(y) e^{W(y)} >W(y) e^{W(y)} = y$, which
  is the second inequality.

Finally, as $W(y) \in (0,1]$, by monotonicity,
  $W\left( y \right)
  \le
  W\left( y \right){e^{W\left( y \right)}}
  \equiv
  y
  \le W\left( y \right)e$,
  that is, \[y \ge W\left( y \right) \ge y/e.\]
\end{proof}

\end{document}